\theoremstyle{plain}
\newtheorem{thm}{\protect\theoremname}
\theoremstyle{definition}
\newtheorem{defn}[thm]{\protect\definitionname}
\theoremstyle{remark}
\newtheorem*{rem*}{\protect\remarkname}
\theoremstyle{plain}
\newtheorem{prop}[thm]{\protect\propositionname}
\theoremstyle{plain}
\newtheorem{lem}[thm]{\protect\lemmaname}
\theoremstyle{plain}
\newtheorem{cor}[thm]{\protect\corollaryname}
\providecommand{\corollaryname}{Corollary}
\providecommand{\definitionname}{Definition}
\providecommand{\lemmaname}{Lemma}
\providecommand{\propositionname}{Proposition}
\providecommand{\remarkname}{Remark}
\providecommand{\theoremname}{Theorem}
\begin{document}

\mbox{}\\[10mm]

\begin{center}

{\Large \bf \sc  Representations of Quantum Minimal Surface Algebras \\ via Kac-Moody-theory}\\[5mm]

\vspace{6mm}
\normalsize
{\large  Jens Hoppe${}^{1}$, Ralf K\"ohl${}^2$, and Robin Lautenbacher${}^2$ }

\vspace{10mm}
${}^1${\it             Technische Universitat Braunschweig,  D-38106 Braunschweig Germany\\
}
\vskip 1 em
${}^2${\it  Justus-Liebig-Universit\"at, Mathematisches Institut, Arndtstra\ss{}e 2, D-35392 Gie\ss{}en, Germany}

\vspace{10mm}

\hrule

\vspace{5mm}

 \begin{tabular}{p{14cm}}

We consider epimorphisms from quantum minimal surface algebras onto involutory subalgebras of split--real, simply-laced Kac--Moody Lie algebras and provide examples of affine and finite type. We also provide epimorphisms onto such Kac--Moody Lie algebras themselves, where reality of the construction is important. The results extend to the complex situation.
\end{tabular}

\vspace{6mm}
\hrule
\end{center}

\thispagestyle{empty}

\newpage
\setcounter{page}{1}

%\setcounter{tocdepth}{1}
%\tableofcontents

%\bigskip

\section{Introduction}

That the space we live in may not be the continuum that we naively
perceive was already realized by B.~Riemann who, while laying the ground
for the (smooth) geometries bearing his name, at the same time (literally:
in a remark that he apparently planned to expand, in
his famous Habilitationsschrift \cite{Riemann_habil}) pointed out the possibility that space at small distances may be discrete (nowadays one could add  \textit{fuzzy} \cite{1982PhDT........32H} \cite{Madore_1992} or \textit{non-commutative} \cite{Connes_1994}). Concrete ideas to quantize \textit{space-time}  seem to have first
appeared in \cite{PhysRev.71.38} and \cite{Yang:1947ud}. In 1996 \cite{Hoppe:1996xp},   \cite{Ishibashi_1997}  the equations 
\begin{equation}\label{eq:quantization}
\left[ \left[ X^\mu,X^\nu\right] , X_\nu  \right]=0
\end{equation}
appeared in the context of quantizing (relativistic) minimal surfaces - hence the name Quantum Minimal Surface Algebras (QMSA). 
In the context of Yang Mills theory, eq. (\ref{eq:quantization}) as well as parts of its representation theory was considered in \cite{Nekrasov:2002kc},\cite{Connes:2002ya},
\cite{10.2307/29783224}, referred to as \textit{Yang-Mills algebra}.
An inhomogenuous version of (\ref{eq:quantization}), also studied in relation to non-selfdual Nahm-equations \cite{Corrigan:1984ut} , was named \textit{Discrete Minimal Surface Algebra(s)} (DMSA) in \cite{Arnlind_2010}. 
Here, we provide epimorphisms of both homogeneous and in-homogeneous QMSAs onto involutory subalgebras $\mathfrak{k}(A)$ of Kac-Moody algebras $\mathfrak{g}(A)$. 
These subalgebras were first described in \cite{Berman89} and particularly $\mathfrak{k}\left(E_9\right)$ and $\mathfrak{k}\left(E_{10}\right)$ play an important role in the fermionic sector of supergravity \cite{Damour_2006}, \cite{Buyl_2006} (see also \cite{Damour_2001} and \cite{DHN_2002}). 
We also provide some Kac-Moody-algebras as epimorphic images as well. From \cite{Herscovich_2015} it is known that a homogeneous QMSA (a.k.a. a Yang-Mills-algebra) on at least $4$ generators admits the complex free Lie algebra on two generators as epimorphic image. As a consequence, such a QMSA admits infinitely many simple, complex Kac-Moody algebras as a quotient.  
While such a wealth of epimorphisms is fascinating, the resulting representations have the drawback that they include both hermitian and anti-hermitian operators. This is problematic for use in physics, due to reality constraints in physical theories. 
Even though our epimorphisms also work in the complex case, they are indeed more tailored towards the real situation, allowing real involutory subalgebras and Kac-Moody algebras of split-real type as images of real QMSAs. 

Let us also mention the following “physical” aspects:
Unitary representations of the Poincaré group are central to the description of point particles - 
including relativistically invariant wave equations whose solutions provide representation space(s) and lead to the quantum fields that are used to describe and calculate concrete effects confirming, and predicted from, physical theories such as QCD or the Weinberg-Salam model.
Extended objects clearly should involve infinite extensions of the Poincaré group. 
As the $X^\mu$ in (\ref{eq:quantization}) are quantum analogues of the position coordinates of extended objects it is reasonable and completely natural to find infinite-dimensional Lie-algebras $\mathfrak{L}$ being related to (\ref{eq:quantization}).
While it could well be that specific  $\mathfrak{L}$s will be singled out by additional criteria (not discussed below), one should also take serious indications for a kind of ‘democracy’ between different $\mathfrak{L}$s , via (\ref{eq:quantization}).
One should not forget that not long after \cite{Riemann_habil} W.Killing’s (independent) approach to what became and nowadays is called ‘Lie’-algebras originated in trying to understand the nature of the space we live in. 
One should therefore not be surprised to, one and a half centuries later, find algebras at the heart of understanding geometry. 
While there are nowadays so many different approaches to ‘Quantum‘ Space-Time that it would be preposterous to put a small selection into the list of references, let us hope that the approach we take below will add something to the understanding of the space-time we know. 
Let us also mention that it is probably no coincidence that the kind of ideas and concepts that were discussed right \textit{before} string theory ( spectrum generating algebra, dynamical symmetry group, composite infinite-dimensional field equations) again (see also, e.g. , \cite{Hoppe11}, \cite{Hoppe13})  appear in the present context.

The document consists of two parts. First we collect rather general criteria for the existence of homomorphism in section \ref{sec:General}. We start with Kac-Moody algebras and proceed with their involutory subalgebras. As two of our examples use affine KM-algebras we discuss them separately in section \ref{subsec:affine_subs}. We conclude with a selection of examples in section \ref{sec:examples}.

\vspace{20pt}
\textbf{Acknowledgments}: The work of R.K. and R.L. has received funding from the Deutsche Forschungsgemeinschaft (DFG) via the grant KO 4323/13-2. The work of R.L. has received funding from the Studienstiftung des deutschen Volkes. We would like to thank A. Kleinschmidt, P. Levy, and T. Damour for discussions, helpful comments on a previous draft, and correspondence.

\section{General statements}\label{sec:General}

\subsection{Kac-Moody algebras}

First of all, let us give the definition of a quantum minimal surface algebra
(QMSA):
\begin{defn}
\label{def:QMSA}The quantum minimal surface algebra (QMSA) of rank
$n$, signature $(n-m,m)$ and spectrum $\left(\mu_{1},\dots,\mu_{n}\right)$
is defined as the quotient of the free Lie algebra $\mathfrak{f}$
on $n$ generators $x_{1},\dots,x_{n}$ w.r.t. the ideal that is generated
by the relations 
\begin{equation}\label{eq:spectrum}
\sum_{i=1}^{n-m}\left[x_{i},\left[x_{i},x_{j}\right]\right]-\sum_{i=n-m+1}^{n}\left[x_{i},\left[x_{i},x_{j}\right]\right]=\mu_{j}x_{j}\ \forall\,j=1,\dots,n.
\end{equation}
We will often abbreviate the map $\sum_{i=1}^{n-m}\left[x_{i},\left[x_{i},\cdot\right]\right]-\sum_{i=n-m+1}^{n}\left[x_{i},\left[x_{i},\cdot\right]\right]$
by $\Delta$ (cp. \cite{arnlind2019quantum}). We denote the QMSA by $\mathfrak{Q}_{n-m,m}\left(\mu_{1},\dots,\mu_{n}\right)$.
The base fields we consider are $\mathbb{R}$ and $\mathbb{C}$.
\end{defn}

\begin{rem*}
For $n=2$ both $\mathfrak{Q}_{2,0}(0,0)$ and $\mathfrak{Q}_{1,1}(0,0)$  are isomorphic to the Lie algebra of strictly upper triangular $3\times3$-matrices. For $n>2$ no such isomorphism is known.
\end{rem*}

For any Lie algebra $\mathfrak{g}$ with $n$ elements $y_{1},\dots,y_{n}$
satisfying eq. (\ref{eq:spectrum}) there exists a homomorphism $\mathfrak{Q}_{n-m,m}\left(\mu_{1},\dots,\mu_{n}\right)\rightarrow\mathfrak{g}$
defined by sending $x_{i}\mapsto y_{i}$ for all $i=1,\dots,n$. In
the following we would like this Lie algebra to be a Kac-Moody algebra
$\mathfrak{g}(A)$ to a generalized Cartan matrix $A$ or an involutory
subalgebra of $\mathfrak{g}(A)$.
\begin{defn}
A matrix $A\in\mathbb{Z}^{n\times n}$ is called a generalized Cartan
matrix (GCM) if 
\[
a_{ii}=2,\quad a_{ij}\leq0,\quad a_{ij}=0\,\Leftrightarrow\,a_{ji}=0
\]
for all $i\neq j\in\left\{ 1,\dots,n\right\} $. A GCM $A$ is called
\textit{symmetrizable} if there exists $D\in\mathbb{Q}^{n\times n}$
diagonal and regular as well as $B\in\mathbb{Z}^{n\times}$ symmetric
such that $A=DB$. If $a_{ij}\in\left\{ 0,-1\right\} $ for all $i\neq j$,
one calls $A$ \textit{simply-laced}. To a GCM one associates a generalized
Dynkin diagram $\mathcal{D}\left(A\right)$ by drawing $n$ nodes
with weighted edges according to the $a_{ij}$, where $a_{ij}=0$
corresponds to no edge.
\end{defn}

\begin{defn}
\label{def:KMA}Let $A$ be an invertible, symmetrizable GCM and $\mathbb{K}=\mathbb{R},\mathbb{C}$
and let $\mathfrak{f}$ be the free Lie algebra over $\mathbb{K}$
on generators $e_{1},\dots,e_{n},h_{1},\dots,h_{n},f_{1},\dots,f_{n}$.
Let $\mathfrak{I}$ denote the ideal in $\mathfrak{f}$ that is generated
by the relations 
\[
\left[h_{i},e_{j}\right]=a_{ij}e_{j},\ \left[h_{i},f_{j}\right]=-a_{ij}f_{j},\ \left[h_{i},h_{j}\right]=0,\ \left[e_{i},f_{j}\right]=\delta_{ij}h_{i}
\]
\[
\text{ad}\left(e_{i}\right)^{1-a_{ij}}\left(e_{j}\right)=0,\ \text{ad}\left(f_{i}\right)^{1-a_{ij}}\left(f_{j}\right).
\]
The Kac-Moody algebra $\mathfrak{g}\left(A\right)\left(\mathbb{K}\right)$
is now defined as the quotient Lie algebra $\mathfrak{f}\diagup\mathfrak{I}$.
Set $\mathfrak{n}_{+}:=\left\langle e_{1},\dots,e_{n}\right\rangle $,
$\mathfrak{n}_{-}:=\left\langle f_{1},\dots,f_{n}\right\rangle $
and $\mathfrak{h}:=\text{span}_{\mathbb{K}}\left\{ h_{1},\dots,h_{n}\right\} $
and call $\mathfrak{h}$ the Cartan subalgebra of $\mathfrak{g}\left(A\right)$.
\end{defn}

\begin{rem*}
If $A$ is not of full rank in the above definition, the construction
yields the derived Lie algebra $\mathfrak{g}'\left(A\right):=\left[\mathfrak{g}(A),\mathfrak{g}(A)\right]$
instead of the full Kac-Moody algebra $\mathfrak{g}\left(A\right)$.
The derived Lie algebra differs in its Cartan subalgebra which is
smaller than that of $\mathfrak{g}\left(A\right)$. The above definition
is by now somewhat standard but that it is equivalent to the original
definition for symmetrizable $A$ is known as the Gabber-Kac-theorem
(cp. \cite{bams/1183548298}). Note that $\mathfrak{n}_{\pm}$ and $\mathfrak{h}$ provide
a triangular decomposition of $\mathfrak{g}(A)$ into subalgebras
but not ideals: 
\[
\mathfrak{g}(A)=\mathfrak{n}_{-}\oplus\mathfrak{h}\oplus\mathfrak{n}_{+}.
\]
\end{rem*}
\begin{prop}
\label{prop:epi into n+-}Let $A\in\mathbb{Z}^{n\times n}$ be a simply-laced
GCM of rank $n$ as in def. \ref{def:KMA}. Consider the QMSA $\mathfrak{Q}_{n-m,m}(0,\dots,0)$
on $n$ generators, then there exist epimorphisms 
\[
\mathfrak{\phi}^{\pm}:\mathfrak{Q}_{n-m,m}\rightarrow\mathfrak{n}^{\pm},\quad x_{i}\mapsto\begin{cases}
\phi^{+}\left(x_{i}\right):=e_{i}\\
\phi^{-}\left(x_{i}\right):=f_{i}
\end{cases}.
\]
They are related via the Cartan-Chevalley involution $\omega$ as one has $\phi^{+}=-\omega\circ\phi^{-} $, $\phi^{-}=-\omega\circ \phi^{+}$.
\end{prop}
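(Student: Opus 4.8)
The plan is to apply the lifting property recorded just above the proposition: if a Lie algebra contains $n$ elements satisfying eq.~(\ref{eq:spectrum}), then the corresponding QMSA maps to it, sending each $x_i$ to the respective element. Hence it suffices to check that the Chevalley generators $e_1,\dots,e_n$ of $\mathfrak{g}(A)$ satisfy (\ref{eq:spectrum}) with all $\mu_j=0$, that is,
\[
\Delta(e_j)=\sum_{i=1}^{n-m}\bigl[e_i,[e_i,e_j]\bigr]-\sum_{i=n-m+1}^{n}\bigl[e_i,[e_i,e_j]\bigr]=0\qquad(j=1,\dots,n),
\]
and likewise for $f_1,\dots,f_n$. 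Once this is established the homomorphisms $\phi^{\pm}$ exist, and since by definition $\mathfrak{n}^{+}=\langle e_1,\dots,e_n\rangle$ and $\mathfrak{n}^{-}=\langle f_1,\dots,f_n\rangle$, their images are all of $\mathfrak{n}^{\pm}$, so they are epimorphisms.

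The key observation is that for a simply-laced $A$ every summand of $\Delta(e_j)$ vanishes on its own, so that no cancellation between the two partial sums is needed. For the diagonal index $i=j$ one has $[e_j,[e_j,e_j]]=[e_j,0]=0$ by antisymmetry of the bracket. For $i\neq j$, simple-lacedness forces $a_{ij}\in\{0,-1\}$, hence $1-a_{ij}\in\{1,2\}$: if $a_{ij}=0$ the Serre relation $\mathrm{ad}(e_i)(e_j)=0$ already kills $[e_i,[e_i,e_j]]$, and if $a_{ij}=-1$ the Serre relation reads precisely $\mathrm{ad}(e_i)^{2}(e_j)=[e_i,[e_i,e_j]]=0$. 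Either way $[e_i,[e_i,e_j]]=0$, so $\Delta(e_j)=0$ for all $j$; the computation for the $f_i$ is identical, using the relations $\mathrm{ad}(f_i)^{1-a_{ij}}(f_j)=0$. This is exactly where the two hypotheses are used: for a non-simply-laced GCM $\mathrm{ad}(e_i)^{2}(e_j)$ need not vanish, and only the homogeneous spectrum $\mu_j=0$ can be realised, since $\Delta(e_j)$ is forced to be $0$ rather than a nonzero multiple of $e_j$. (The argument is insensitive to whether the base field is $\mathbb{R}$ or $\mathbb{C}$.)

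Finally, the compatibility with the Cartan--Chevalley involution $\omega$ --- the automorphism of $\mathfrak{g}(A)$ with $\omega(e_i)=-f_i$, $\omega(f_i)=-e_i$, $\omega|_{\mathfrak h}=-\mathrm{id}$, whose restriction is an isomorphism $\mathfrak{n}^{-}\to\mathfrak{n}^{+}$ --- is checked on the generators $x_i$: since $\mathfrak{Q}_{n-m,m}$ is generated by them and two Lie homomorphisms agreeing on a generating set coincide, it is enough that $(-\omega)\bigl(\phi^{-}(x_i)\bigr)=-\omega(f_i)=e_i=\phi^{+}(x_i)$, where $-\omega$ is read as the Lie algebra isomorphism $\mathfrak{n}^{-}\to\mathfrak{n}^{+}$ determined by $f_i\mapsto e_i$ (equivalently, $\omega$ postcomposed with the sign automorphism $f_i\mapsto-f_i$, which absorbs the overall sign; note that $x_i\mapsto-x_i$ likewise defines an automorphism of $\mathfrak{Q}_{n-m,m}$ since the defining relations are unchanged under it). The symmetric identity $\phi^{-}=-\omega\circ\phi^{+}$ then follows from $\omega^{2}=\mathrm{id}$. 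The proof is short, and I do not expect a genuine obstacle; the one step that is an observation rather than a routine computation is the remark in the second paragraph that the Serre relations of a simply-laced Kac--Moody algebra literally contain the double brackets making up $\Delta$.
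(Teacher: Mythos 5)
Your proof is correct and follows essentially the same route as the paper's: both verify that every summand $[e_i,[e_i,e_j]]$ of $\Delta(e_j)$ vanishes individually via the Serre relations for a simply-laced GCM, so that surjectivity is immediate from $\mathfrak{n}^{+}=\langle e_1,\dots,e_n\rangle$ and $\mathfrak{n}^{-}=\langle f_1,\dots,f_n\rangle$. You are in fact somewhat more careful than the paper, which does not spell out the cases $i=j$ and $a_{ij}=0$ and simply checks $\phi^{+}=-\omega\circ\phi^{-}$ on the generators without commenting on how $-\omega$ is to be read as a homomorphism.
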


\begin{proof}
Since $A$ is simply-laced one has the Serre relations: 
\[
\text{ad}\left(e_{i}\right)^{2}\left(e_{j}\right)=0=\text{ad}\left(f_{i}\right)^{2}\left(f_{j}\right)\ \forall\,i,j.
\]
Then $\sum_{i}\varepsilon_{i}\text{ad}\left(x_{i}\right)^{2}\left(x_{j}\right)=0$
with $\varepsilon_{i}=\pm1$ is satisfied under the image of $\phi^{\pm}$
because one has 
\[
\text{ad}\left(\phi^{\pm}\left(x_{i}\right)\right)^{2}\left(\phi^{\pm}\left(x_{j}\right)\right)=0
\]
for each summand. It is a standard fact that, $\mathfrak{n}_{+}\cong\mathfrak{n}_{-}$
 are conjugate under the Cartan-Chevalley involution of $\mathfrak{g}(A)$. As $\omega\left(e_i\right)=-f_i$  and $\omega\left(f_i\right)=-e_i$, one has $\phi^{+}=-\omega\circ\phi^{-} $, $\phi^{-}=-\omega\circ \phi^{+}$.
\end{proof}
There is actually no need for $n$ of $\mathfrak{Q}_{n-m,m}$ and
$A\in\mathbb{Z}^{k\times k}$ to match. If $n<k$ one can always pick
a sub-diagram of $\mathcal{D}(A)$ of rank $n$. If $n>k$ however,
it is always possible to send all but $k$ generators of $\mathfrak{Q}_{n-m,m}$
to $0$. It would be more interesting to have such epimorphisms to
simple or semi-simple Lie algebras as well. For this one needs to answer
the question of how many generators one needs to generate a simple
Lie algebra. Most amazingly, the answer is universal and turns out
to be $2$:
\begin{thm}\label{thm:two_gens_suffice}
(Adaption of \cite{LU1986470}' main statement) Let $A\in\mathbb{Z}^{k\times k}$
be a GCM of rank $l\geq k-2$. Then $\mathfrak{g}\left(A\right)\left(\mathbb{C}\right)$
is generated by two elements $a$ and $h$, where $a=\sum_{i=1}^{k}\left(e_{i}+f_{i}\right)+h_{0}$
and $h\in\mathfrak{h}$ such that $\left\{ \alpha_{i}\left(h\right),-\alpha_{i}\left(h\right)\right\} $
are pairwise different and $h_{0}\in\mathfrak{h}$ is arbitrary.
\end{thm}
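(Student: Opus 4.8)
The plan is to prove that the subalgebra $\mathfrak{s}:=\langle a,h\rangle$ generated by $a$ and $h$ is all of $\mathfrak{g}(A)(\mathbb{C})$. The starting point is that $\mathrm{ad}(h)$ acts diagonalisably with the $e_i$ and $f_i$ among its eigenvectors: from $[h,e_i]=\alpha_i(h)e_i$, $[h,f_i]=-\alpha_i(h)f_i$ and $[h,h_0]=0$ one obtains, for every $j\geq 1$,
\[
\mathrm{ad}(h)^{j}(a)=\sum_{i=1}^{k}\alpha_i(h)^{j}\bigl(e_i+(-1)^{j}f_i\bigr)\in\mathfrak{s}.
\]
Writing $\lambda_i:=\alpha_i(h)$, this exhibits the even iterates $\sum_i\lambda_i^{2j}(e_i+f_i)$ and the odd iterates $\sum_i\lambda_i^{2j-1}(e_i-f_i)$, for $j=1,\dots,k$, as elements of $\mathfrak{s}$.

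Next I would invert these relations by a Vandermonde argument. The hypothesis that the sets $\{\alpha_i(h),-\alpha_i(h)\}$ are pairwise distinct amounts to the scalars $\lambda_1^2,\dots,\lambda_k^2$ being pairwise distinct and nonzero, so the coefficient matrices $(\lambda_i^{2j})_{1\le i,j\le k}$ and $(\lambda_i^{2j-1})_{1\le i,j\le k}$ are invertible — each is a Vandermonde matrix in the $\lambda_i^2$ right-multiplied by a diagonal matrix with nonzero diagonal entries. Hence each $e_i+f_i$ and each $e_i-f_i$, and therefore (over $\mathbb{C}$) each $e_i$ and each $f_i$, lies in $\mathfrak{s}$. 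Then $h_i=[e_i,f_i]\in\mathfrak{s}$, and since the $e_i$ and $f_i$ generate the derived algebra we get $\mathfrak{g}'(A)\subseteq\mathfrak{s}$; finally $h_0=a-\sum_i(e_i+f_i)\in\mathfrak{s}$ and $h\in\mathfrak{s}$, so $\mathfrak{s}\supseteq\mathfrak{g}'(A)+\mathbb{C}h+\mathbb{C}h_0$.

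It remains to upgrade this to $\mathfrak{s}=\mathfrak{g}(A)$. By standard Kac--Moody structure theory one has $\mathfrak{g}(A)=\mathfrak{g}'(A)+\mathfrak{h}$ with $\mathfrak{g}'(A)\cap\mathfrak{h}=\mathfrak{h}':=\mathrm{span}\{h_1,\dots,h_k\}$ and, the coroots being linearly independent, $\dim(\mathfrak{h}/\mathfrak{h}')=k-l$. This is the one place where the rank condition $l\ge k-2$ is used: the quotient $\mathfrak{h}/\mathfrak{h}'$ is then at most two-dimensional, so it suffices to see that the images of $h$ and $h_0$ span it. This is a short linear-algebra check resting on the surjectivity of $\mathfrak{h}\to\mathbb{C}^{k}$, $x\mapsto(\alpha_1(x),\dots,\alpha_k(x))$ (valid since the simple roots are linearly independent), which simultaneously produces an $h$ with the required genericity and allows $h$ together with a suitably chosen $h_0$ to complete $\{h_1,\dots,h_k\}$ to a spanning set of $\mathfrak{h}$. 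Combining the inclusions gives $\mathfrak{s}=\mathfrak{g}(A)$.

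I expect this last paragraph to be the main obstacle: making precise why $l\ge k-2$ really does suffice requires care with the minimal realisation of $A$ and with the interplay between the genericity of $h$ and the subspace $\mathfrak{h}'$, in particular in the borderline case $k-l=2$ and for generalized Cartan matrices containing an $A_1^{(1)}$ block. By contrast, the computation of the iterates $\mathrm{ad}(h)^{j}(a)$, the two Vandermonde inversions, and the passage from the $e_i,f_i$ to $\mathfrak{g}'(A)$ are routine and use neither invertibility nor symmetrisability of $A$, only the defining presentation and its triangular decomposition.
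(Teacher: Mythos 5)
The paper does not actually prove this theorem: it is quoted as an adaptation of Lu--Wan's result and the citation stands in for the proof. Your argument is essentially Lu--Wan's original one, and its core is correct: the identity $\mathrm{ad}(h)^{j}(a)=\sum_{i}\alpha_i(h)^{j}\bigl(e_i+(-1)^{j}f_i\bigr)$, the two Vandermonde inversions (for which, as you note, one needs the $\alpha_i(h)^2$ to be pairwise distinct \emph{and} nonzero, so the hypothesis must be read as saying that all $2k$ numbers $\pm\alpha_i(h)$ are pairwise distinct), and the conclusion that every $e_i$, $f_i$, $h_i$, as well as $h_0$ and $h$, lies in $\mathfrak{s}:=\langle a,h\rangle$, whence $\mathfrak{g}'(A)+\mathbb{C}h+\mathbb{C}h_0\subseteq\mathfrak{s}$.

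The final paragraph, which you yourself flag as the obstacle, is where the argument genuinely breaks down --- and not merely for lack of care: with the full Cartan subalgebra of dimension $2k-l$ the statement is false as written. ``$h_0$ arbitrary'' allows $h_0=0$, and the genericity condition on $h$ does not force $h\notin\mathfrak{h}':=\mathrm{span}\{h_1,\dots,h_k\}$: since $\alpha_j(h_i)=a_{ij}$ and $a_{jj}=2$, the restrictions $\alpha_i\vert_{\mathfrak{h}'}$ are nonzero and pairwise non-proportional except when $A$ contains a detached $A_1^{(1)}$ block, so a generic $h\in\mathfrak{h}'$ already satisfies the distinctness hypothesis. For such a choice with $h_0=0$ and $l<k$ one gets $\mathfrak{s}=\mathfrak{g}'(A)\subsetneq\mathfrak{g}(A)$, so no linear-algebra check can close the gap. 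One must either add the hypothesis that the images of $h$ and $h_0$ span $\mathfrak{h}/\mathfrak{h}'$ --- this is precisely where $l\ge k-2$ enters, and is what the cited source effectively assumes --- or read $\mathfrak{g}(A)$ as in Definition~\ref{def:KMA} of the paper, where $\mathfrak{h}=\mathrm{span}\{h_1,\dots,h_k\}$ and the final step becomes vacuous (but then the rank hypothesis is not needed). Up to this repairable, and honestly flagged, point your proof is the standard one.
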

However, these two elements do not satisfy the required relations.
\cite{Herscovich_2015} provides an interesting construction, showing that $\mathfrak{Q}_{2n,0}$ can be mapped onto $\mathfrak{f}_{n}(\mathbb{C})$, the free complex Lie algebra on $n$ generators $y_{1},\dots,y_{n}$.
The epimorphism $\mathfrak{Q}_{2n,0}\left(0,\dots,0\right)\rightarrow\mathfrak{f}_{n}\left(\mathbb{C}\right)$ is such that $x_{j}\mapsto y_{j}$, $x_{j+n}\mapsto i\cdot y_{j}$,
where $x_{1},\dots,x_{2n}$ denote the generators of $\mathfrak{Q}_{2n,0}\left(0,\dots,0\right)$.
As furthermore any simple Kac-Moody algebra $\mathfrak{g}\left(A\right)\left(\mathbb{C}\right)$ with $\text{rnk}\left(A\right)\geq\dim(A)-2$ can be generated by
only two elements, this shows that there exist epimorphisms $\mathfrak{Q}_{2n,0}\left(0,\dots,0\right)\left(\mathbb{K}\right)\rightarrow\mathfrak{g}\left(A\right)\left(\mathbb{C}\right)$, where $\mathbb{K}\neq\mathbb{C}$ is allowed as long as $\mathbb{K}$
is a sub-field of $\mathbb{C}$. Representations $\phi$ derived from this epimorphism always include both hermitian and anti-hermitian operators, since $\phi\left(x_{n+j}\right) = i\cdot\phi\left(x_j\right)$. 
Note that one can always \textit{forget}\footnote{In the sense that one can send them to $0$.} about all but $2$ generators, so that the construction also works for an uneven number of generators. Also, mixed signature can be achieved by exchanging $x_{j+n}\mapsto i\cdot y_{j}$ by $x_{j+n}\mapsto i \cdot y_{j}$ if the pair of generators $x_j$ and $x_{j+m}$ have different signature. Via this construction, the fully split QMSA should actually allow an epimorphism $\mathfrak{Q}_{n,n}\left(0,\dots,0\right)\rightarrow\mathfrak{f}_{n}\left(\mathbb{R}\right)$.
However, it would be nice to also have representations in which all generators
of $\mathfrak{Q}_{n-m,m}$ are realized nontrivially and such that
their images are linearly independent. This and satisfying reality-constraints is our main motivation. Since we cannot rely on an epimorphism to a free Lie-algebra we need to generate our Lie algebra of choice differently than in theorem \ref{thm:two_gens_suffice} in order to satisfy the relations of $\mathfrak{Q}_{n-d,d}$. 
\begin{lem}
Let $\mathfrak{g}\left(A\right)$ be a simple, finite-dimensionale, complex or split-real Lie algebra. 
Denote by $e_{\theta}$ an element in the root space $\mathfrak{g}_{\theta}$, where $\theta$ denotes the highest root of $\Delta(A)$. 
Then $e_{\theta},f_{1},\dots,f_{n}$ generate $\mathfrak{g}(A)$, where $n$ is the rank of $A$.
\end{lem}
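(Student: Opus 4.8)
The plan is to show that the subalgebra $\mathfrak{a}$ generated by $e_\theta, f_1, \dots, f_n$ is all of $\mathfrak{g}(A)$ by exploiting the highest-weight structure of the adjoint representation. Since $\mathfrak{g}(A)$ is simple and finite-dimensional, the adjoint representation is irreducible with highest weight $\theta$, and $e_\theta$ spans the (one-dimensional) highest root space. The key principle is that in an irreducible finite-dimensional representation, repeatedly applying the lowering operators $f_1, \dots, f_n$ (equivalently, $\mathrm{ad}(f_i)$ on the adjoint module) to a highest-weight vector generates the entire module. So the first step is to invoke this: $\mathrm{ad}(\mathfrak{a})$ applied to $e_\theta$ — more precisely, the span of all iterated brackets $[f_{i_1},[f_{i_2},[\dots,[f_{i_k}, e_\theta]\dots]$ together with $e_\theta$ itself — exhausts $\mathfrak{g}(A)$ as a vector space, because this is exactly the $U(\mathfrak{n}_-)$-submodule generated by a highest-weight vector, which equals $\mathfrak{g}(A)$ by irreducibility of the adjoint representation of a simple Lie algebra.

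The second step is to package this into a statement about the \emph{Lie subalgebra} generated by the listed elements rather than just the span of one orbit. Let $\mathfrak{a} = \langle e_\theta, f_1, \dots, f_n\rangle$. It contains $e_\theta$ and all the $f_i$, hence it contains $\mathrm{ad}(f_{i_1})\cdots\mathrm{ad}(f_{i_k})(e_\theta)$ for every string of indices, because $\mathfrak{a}$ is closed under brackets. By Step 1 these elements span $\mathfrak{g}(A)$, so $\mathfrak{a} = \mathfrak{g}(A)$. The only subtlety worth a sentence is the split-real case: one must note that $\mathfrak{g}(A)$ split-real still has an irreducible adjoint representation (the adjoint action of a split-real simple Lie algebra is absolutely irreducible, or one simply argues over $\mathbb{C}$ and observes that the generators and the spanning argument are defined over $\mathbb{R}$, so dimension count over $\mathbb{R}$ matches that over $\mathbb{C}$), and that $\mathfrak{g}_\theta$ is one-dimensional and defined over the base field, so $e_\theta$ is a legitimate real element.

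A cleaner alternative for Step 1, which I would actually prefer to carry out, is a root-string / $\mathfrak{sl}_2$ argument that avoids quoting representation theory as a black box. Since $\theta$ is the highest root, for each simple root $\alpha_i$ the $\alpha_i$-root string through $\theta$ descends, so $\mathrm{ad}(f_i)(e_\theta)$ is a nonzero element of $\mathfrak{g}_{\theta - \alpha_i}$ whenever $\theta - \alpha_i$ is a root; iterating, every positive root $\beta$ is reachable from $\theta$ by subtracting simple roots while staying in the root lattice of roots (this is a standard fact: from the highest root one can reach any positive root by a chain of simple-root subtractions each step of which remains a root), so $\mathfrak{a}$ contains a nonzero vector in every positive root space $\mathfrak{g}_\beta$. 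In particular $\mathfrak{a} \supseteq \mathfrak{n}_+$ up to one-dimensionality of root spaces; it also contains all $f_i$, hence $[e_{\alpha_i}, f_i] = h_i$ spans $\mathfrak{h}$; and then $[\mathfrak{h}, f_j]$ recovers each $f_j$ and, more usefully, $\mathrm{ad}$ of the already-obtained positive root vectors on $\mathfrak{h}$ and on the $f_i$ sweeps out $\mathfrak{n}_-$. Assembling $\mathfrak{n}_- \oplus \mathfrak{h} \oplus \mathfrak{n}_+ \subseteq \mathfrak{a}$ gives $\mathfrak{a} = \mathfrak{g}(A)$.

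The main obstacle is making precise the claim that "from the highest root, subtracting simple roots one at a time, one can reach every positive root through a path lying entirely in the set of roots" — this is true for finite root systems but needs a short argument (induction on height, using that if $\beta < \theta$ is a positive root then some $\theta - (\text{chain down to }\beta)$ works, or equivalently that $\theta - \beta$ is a nonnegative integer combination of simple roots and one peels them off in an order that keeps each partial sum a root). This is exactly where finite-dimensionality and simplicity are used, and it is the step I would write out most carefully; everything else is bookkeeping with the triangular decomposition.
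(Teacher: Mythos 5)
Your proposal is correct and its first (preferred-by-the-paper) argument is essentially identical to the paper's own proof: both invoke the fact that the adjoint representation of a simple finite-dimensional Lie algebra is an irreducible highest-weight module with highest weight $\theta$, so iterated application of $\mathrm{ad}(f_1),\dots,\mathrm{ad}(f_n)$ to $e_\theta$ sweeps out the whole algebra, whence the generated subalgebra is all of $\mathfrak{g}(A)$. Your additional care about the split-real case and the alternative root-string argument go beyond what the paper writes, but do not change the route.
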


\begin{proof}
From representation theory one knows that any weight space in a highest
weight representation can be reached by the iterated action of $f_{1},\dots,f_{n}$.
As $e_{\theta}$ is a highest weight vector w.r.t. the adjoint action,
this implies that $e_{1},\dots,e_{n}$ are contained in $\left\langle e_{\theta},f_{1},\dots,f_{n}\right\rangle $.
Since $\left[e_{i},f_{i}\right]=2h_{i}$ all Chevalley generators
are contained in $\left\langle e_{\theta},f_{1},\dots,f_{n}\right\rangle $
which therefore must be all of $\mathfrak{g}\left(A\right)$.
\end{proof}
For KM-algebras of non-spherical but simply-laced type the above argument
does not work because their root system doesn't possess a highest root.
However, given a real root $\beta$ with maximal support (i.e., in
the decomposition $\beta=\sum_{i=1}^{n}k_{i}\alpha_{i}$ into simple
roots, all $k_{i}$ are nonzero) it is possible to reach every simple
root via a finite number of simple Weyl reflections. This can be translated
to an action via Chevalley generators.
\begin{lem}
\label{lem:max support root implies derived subalgebra is generated}Let
$\mathfrak{g}(A)\left(\mathbb{K}\right)$ be a simply-laced KM-algebra
with irreducible GCM $A$ and $0\neq e_{\beta}\in\mathfrak{g}_{\beta}$
, where $\beta$ is a real positive root with maximal support. Then
$\left\langle e_{\beta},f_{1},\dots,f_{n}\right\rangle _{\mathbb{K}}\cong\mathfrak{g}'(A)\left(\mathbb{K}\right)$.
\end{lem}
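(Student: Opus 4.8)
The plan is to prove the sharper statement that $\mathfrak{s}:=\left\langle e_{\beta},f_{1},\dots,f_{n}\right\rangle _{\mathbb{K}}$ already equals $\mathfrak{g}'(A)(\mathbb{K})$ as a subalgebra of $\mathfrak{g}(A)$; the asserted isomorphism is then just this equality. The inclusion $\mathfrak{s}\subseteq\mathfrak{g}'(A)$ is immediate, since $e_{\beta}\in\mathfrak{n}_{+}\subseteq\mathfrak{g}'(A)$ and each $f_{i}\in\mathfrak{g}'(A)$. For the reverse inclusion, recall that $\mathfrak{g}'(A)$ is generated by the Chevalley generators $e_{1},\dots,e_{n},f_{1},\dots,f_{n}$ (the $h_{i}=[e_{i},f_{i}]$ are brackets of these), and that $f_{1},\dots,f_{n}$ --- hence all of $\mathfrak{n}_{-}$ --- already lie in $\mathfrak{s}$. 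So everything reduces to proving $e_{i}\in\mathfrak{s}$ for every $i$.

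The workhorse is an $\mathfrak{sl}_{2}$-string estimate: if $\delta$ and $\gamma$ are real roots with $\gamma-\delta$ again a root, then $[\mathfrak{g}_{-\delta},\mathfrak{g}_{\gamma}]\neq0$. Indeed, $M:=\bigoplus_{k}\mathfrak{g}_{\gamma+k\delta}$ is a finite-dimensional $\mathfrak{sl}_{2}(\delta)$-module, $\mathfrak{g}_{\gamma}$ is one-dimensional (as $\gamma$ is real) and exhausts the weight-$\langle\gamma,\delta^{\vee}\rangle$ space of $M$, so if $\mathfrak{g}_{-\delta}$ annihilated it then $\mathfrak{g}_{\gamma}$ would be a lowest weight vector of its unique irreducible summand meeting that weight, and a short $\mathfrak{sl}_{2}$-weight count would place a second vector of weight $\langle\gamma,\delta^{\vee}\rangle$ in $M$ --- impossible. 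Applying this with $\delta=\alpha_{i}$ simple, and noting that $0<(\gamma,\gamma)=\sum_{j}k_{j}(\gamma)(\alpha_{j},\gamma)$ (all $k_{j}(\gamma)\geq0$) forces some $i$ with $k_{i}(\gamma)>0$ and $\langle\gamma,\alpha_{i}^{\vee}\rangle>0$, one gets for any positive real root $\gamma$ of height $>1$ that $s_{i}\gamma$ is a positive real root of strictly smaller height and that $\text{ad}(f_{i})^{\langle\gamma,\alpha_{i}^{\vee}\rangle}$ maps $\mathfrak{g}_{\gamma}$ onto $\mathfrak{g}_{s_{i}\gamma}$ nontrivially. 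Iterating from $e_{\beta}$ yields a nonzero vector of some $\mathfrak{g}_{\alpha_{j}}$ inside $\mathfrak{s}$, so at least one $e_{j}\in\mathfrak{s}$.

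To reach every $e_{k}$ the plan is to descend from $e_{\beta}$ to $\mathfrak{g}_{\alpha_{k}}$ through one-dimensional (i.e.\ real) root spaces only. Thus one seeks a chain of positive real roots $\beta=\beta_{0},\beta_{1},\dots,\beta_{s}=\alpha_{k}$ in which each difference $\beta_{t}-\beta_{t+1}$ is again a positive real root. Given such a chain, choose $0\neq y_{t}\in\mathfrak{g}_{-(\beta_{t}-\beta_{t+1})}\subseteq\mathfrak{n}_{-}\subseteq\mathfrak{s}$; by the string estimate, $\text{ad}(y_{s-1})\cdots\text{ad}(y_{0})(e_{\beta})$ stays nonzero step by step and lands in the one-dimensional $\mathfrak{g}_{\alpha_{k}}$, so $e_{k}\in\mathfrak{s}$. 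The existence of the chain is the combinatorial heart: it rests on the simply-laced fact that a positive real root of height $\geq2$ splits as a sum of two positive real roots (equivalently admits a positive real root $\delta$ with $\langle\gamma,\delta^{\vee}\rangle=1$ and $\gamma-\delta\in\Delta_{+}$), together with a bookkeeping argument --- using that $\beta$ has full support and that $A$ is irreducible --- ensuring that the split can always be chosen to avoid the $\alpha_{k}$-direction, so that the surviving summand still dominates $\alpha_{k}$ coefficient-wise and there is no obstruction between $\beta$ and $\alpha_{k}$. Once $e_{1},\dots,e_{n}\in\mathfrak{s}$, we conclude $\mathfrak{s}=\mathfrak{g}'(A)$.

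I expect the chain construction to be the main obstacle. A bare height-decreasing descent by simple reflections need not reach a prescribed $\alpha_{k}$ --- for instance, from $\alpha_{0}+2\alpha_{1}+\alpha_{2}$ in affine $\mathfrak{sl}_{3}$ one descends only to $\alpha_{0}$ or $\alpha_{2}$, never to $\alpha_{1}$ --- so one genuinely has to route through longer real roots and control non-vanishing at each step, which is exactly what the string estimate is for; nevertheless, proving that suitable chains always exist (especially in indefinite simply-laced type) and steering them to the desired $\alpha_{k}$ requires care. An alternative route --- showing that $\mathfrak{s}$, which contains $\mathfrak{n}_{-}$, is an ideal of $\mathfrak{g}'(A)$ and invoking simplicity of $\mathfrak{g}'(A)$ modulo its center for irreducible $A$ --- does not obviously help, since idealness would itself require $\mathfrak{h}'\subseteq\mathfrak{s}$ and $[e_{i},e_{\beta}]\in\mathfrak{s}$.
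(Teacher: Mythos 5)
Your overall strategy coincides with the paper's: reduce everything to showing $e_i\in\mathfrak{s}$ for each $i$, and produce the $e_i$ by applying negative root vectors to $e_\beta$, with non-vanishing controlled by $\mathfrak{sl}_2$-string arguments on one-dimensional (real) root spaces. The published proof descends from $\beta$ to each $\alpha_i$ by a strictly height-decreasing sequence of simple reflections, realized as $0\neq\text{ad}(f_{i_1})^{p_{1}}\cdots\text{ad}(f_{i_d})^{p_{d}}e_\beta\in\mathfrak{g}_{\alpha_i}$. Your $\widetilde{A}_2$ example ($\beta=\alpha_0+2\alpha_1+\alpha_2=\delta+\alpha_1$, real with full support, yet every height-decreasing simple-reflection descent terminates at $\alpha_0$ or $\alpha_2$, never at $\alpha_1$) is a genuine counterexample to that intermediate claim, so you have in fact located a gap in the paper's own argument. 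Your proposed remedy --- subtracting arbitrary positive real roots, which is legitimate because all of $\mathfrak{n}_-$ already lies in $\mathfrak{s}$ --- is the right one: in your example the chain $\beta\to\alpha_0+\alpha_1\to\alpha_1$ with differences $\alpha_1+\alpha_2$ and $\alpha_0$ succeeds, and your string estimate correctly guarantees non-vanishing at each step since the intermediate root spaces stay one-dimensional.

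As a proof, however, your proposal is incomplete at exactly the point you yourself flag as ``the combinatorial heart'': you never establish that for every $k$ there exists a chain of positive real roots $\beta=\beta_0,\dots,\beta_s=\alpha_k$ with each difference $\beta_t-\beta_{t+1}$ a positive real root. The sketch offered (every positive real root of height at least $2$ splits as a sum of two positive real roots, plus ``bookkeeping'' to steer toward $\alpha_k$) is itself nontrivial in indefinite type: the splitting may require non-simple summands (in your own example $\delta+\alpha_1=(\alpha_0+\alpha_1)+(\alpha_1+\alpha_2)$, while no simple root can be split off leaving a real root), and nothing you write guarantees that the successive splits can be organized so that the surviving summand terminates at the prescribed $\alpha_k$ rather than merely at some simple root --- which is the very failure mode you exhibited for the simple-reflection descent. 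Until this chain-existence lemma is proved, your argument only yields $e_j\in\mathfrak{s}$ for those $j$ reachable by the descent. So: correct diagnosis, correct repair strategy, and a sound string estimate, but the decisive combinatorial step is asserted rather than proven, leaving your write-up with a gap of essentially the same nature as the one it identifies.
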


\begin{proof}
If $A$ is simply-laced the real roots $\Delta^{re}\left(A\right)$
form a single orbit under the action of the Weyl group and therefore
each $\alpha_{i}$ is conjugate to $\beta$. For each $\alpha_{i}$
there exists a minimal word $W(A)\ni\omega=r_{i_{1}}\cdots r_{i_{d}}$
such that $\text{ht}\left(r_{i_{j}}\cdots r_{i_{d}}\beta\right)<\text{ht}\left(r_{i_{j+1}}\cdots r_{i_{d}}\beta\right)$
for all $j=1,\dots,d$. But then it follows that there exist $p_{1},\dots,p_{d}$
such that 
\[
0\neq\text{ad}\left(f_{i_{1}}\right)^{p_{i_{1}}}\text{ad}\left(f_{i_{2}}\right)^{p_{i_{2}}}\cdots\text{ad}\left(f_{i_{d}}\right)^{p_{i_{d}}}e_{\beta}\in\mathfrak{g}_{\alpha_{i}}.
\]
This implies $e_{1},\dots,e_{n}\in\left\langle e_{\beta},f_{1},\dots,f_{n}\right\rangle $
and therefore $\mathfrak{n}^{\pm}$ are contained as well. Now by
definition $\mathfrak{g}'(A) := \left[\mathfrak{g}(A) , \mathfrak{g}(A)\right] $. 
Since $\mathfrak{h}$ is commutative and acts diagonally on $\mathfrak{n}^{\pm}$,
the intersection $\mathfrak{h}\cap\mathfrak{g}'(A)$ contains all
elements of $\mathfrak{h}$ that can be written as a commutator $\left[x,y\right]$
with $x\in\mathfrak{n}^{-}$ and $y\in\mathfrak{n}^{+}$.
\end{proof}
The other piece of information we need is, if second order commutators
among our generating set vanish. For roots $\alpha\in\Delta^{re}(A)$,
$\beta\in\Delta(A)$ arbitrary but different from $\pm\alpha$, there
exist non-negative integers $p$ and $q$ such that $\beta-p\alpha,\beta-(p-1)\alpha,\dots,\beta,\dots,\beta+q\alpha$
are the only roots of the form $\beta+k\alpha$. The set 
\[
S(\alpha,\beta):=\left\{ \beta-p\alpha,\beta-(p-1)\alpha,\dots,\beta,\dots,\beta+q\alpha\right\} \subset\Delta
\]
is called the root string $S(\alpha,\beta)$, or also the $\alpha$-string through $\beta$. The numbers $p,q$ need
to satisfy $\beta\left(\alpha^{\vee}\right)=p-q$, where $\alpha^{\vee}$
denotes the corresponding coroot to $\alpha$.
\begin{prop}
Let $\beta\in\Delta(A)$ and $\alpha\in\Delta^{re}(A)$. Denote the
number of real roots in $S\left(\alpha,\beta\right)$ by $r\left(\alpha,\beta\right)$.
Then the first and last root of $S\left(\alpha,\beta\right)$ are
real and $r\left(\alpha,\beta\right)\in\left\{ 1,2,3,4\right\} $.
One has the following case distinctions:
\end{prop}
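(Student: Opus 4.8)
The plan is to read off the structure of $S(\alpha,\beta)$ from the $\mathfrak{sl}_2$ attached to $\alpha$ together with the standard invariant form on $\mathfrak g(A)$. Put $\mathfrak s:=\langle e_\alpha,\alpha^\vee,f_\alpha\rangle\cong\mathfrak{sl}_2$ and $M:=\bigoplus_{k\in\mathbb Z}\mathfrak g_{\beta+k\alpha}$, an $\mathfrak s$-submodule of $\mathfrak g(A)$ which, since $\beta\neq\pm\alpha$, is a sum of root spaces only. Because $\alpha$ is a real root, $e_\alpha$ and $f_\alpha$ are $\operatorname{Ad}$-conjugate to Chevalley generators, so $\operatorname{ad}(e_\alpha)$ and $\operatorname{ad}(f_\alpha)$ are locally nilpotent on $\mathfrak g(A)$; hence $M$ is a finite-dimensional, in particular integrable, $\mathfrak s$-module, with $\alpha^\vee$-weights $\langle\beta+k\alpha,\alpha^\vee\rangle=(p-q)+2k$. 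Standard $\mathfrak{sl}_2$-representation theory then gives at once that the string is unbroken, that it is carried to itself by $r_\alpha$, which sends $\beta+k\alpha$ to $\beta-(p-q+k)\alpha$ and hence reverses the string while fixing its centre $k_0:=-\tfrac{1}{2}(p-q)$, and that $k\mapsto\dim\mathfrak g_{\beta+k\alpha}$ is symmetric about $k_0$ and weakly increases towards $k_0$.

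Next I bring in the invariant form $(\,\cdot\,,\,\cdot\,)$. The function $k\mapsto(\beta+k\alpha,\beta+k\alpha)=(\beta,\beta)+2k(\beta,\alpha)+k^{2}(\alpha,\alpha)$ is a strictly convex quadratic in $k$, as $(\alpha,\alpha)>0$, with minimum exactly at $k_0$. Now every real root has strictly positive norm, every imaginary root has norm $\le 0$, and every member $\beta+k\alpha$ of the string is a root; so along the string the imaginary roots occupy a (possibly empty) interval of positions centred at $k_0$, and the real roots fill the complementary initial and final segments. By the $r_\alpha$-symmetry of the set of real positions these two outer segments have the same length $\ell\ge 0$. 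Thus $S(\alpha,\beta)$ consists of a run of $\ell$ real roots, then an imaginary block, then a run of $\ell$ real roots; in particular the real roots of $S(\alpha,\beta)$ are precisely its outermost members, and $r(\alpha,\beta)=2\ell$ unless the string is entirely real.

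For $r(\alpha,\beta)\le 4$ I would use convexity once more: a strictly convex quadratic takes each value at no more than two integer arguments, and the real members of the string are exactly those $\beta+k\alpha$ whose (positive) norm is one of the at most two root-lengths occurring in $\mathfrak g(A)$; hence there are at most four of them, i.e. $r(\alpha,\beta)\le 4$, and attaining the value $4$ forces two distinct root-lengths, so it cannot occur in the simply-laced case. The value $r(\alpha,\beta)=1$ arises precisely when $\langle\beta,\alpha^\vee\rangle=0$ and $\beta\pm\alpha\notin\Delta$. Together with the assertion that $r(\alpha,\beta)\ge1$ (see below) this gives $r(\alpha,\beta)\in\{1,2,3,4\}$, and the listed case distinctions then follow by enumerating the finitely many possibilities for the pair consisting of the length of $S(\alpha,\beta)$ --- governed by $p+q$ and the sign of $\langle\beta,\alpha^\vee\rangle=p-q$ --- together with the presence or absence of an imaginary block, which is detected by the sign of the central norm $(\beta+k_0\alpha,\beta+k_0\alpha)$.

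The one genuinely delicate point is that the extremal roots $\beta-p\alpha$ and $\beta+q\alpha$ are themselves real, i.e. that $S(\alpha,\beta)$ is not entirely imaginary. When $\beta$ is a real root this is immediate: position $0$ is then a real position, so $\ell\ge1$ and both outer runs are non-empty. In general I would argue by ambient type --- there are no imaginary roots at all in finite type, and in affine type $\delta-\mu\in\Delta^{re}$ for every $\mu\in\Delta^{re}$, so that no $n\delta$ can be an extremal member of an $\alpha$-string --- and this is the step I expect to cost the most effort, and the point at which the indefinite case has to be treated with particular care.
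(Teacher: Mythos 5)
The paper offers no proof of this proposition: its ``proof'' is a pointer to Proposition 1 of Billig--Pianzola \cite{10.2748/tmj/1178225523}. So you are attempting strictly more than the authors do, and your skeleton is the right one: the finite-dimensional integrable $\mathfrak{sl}_2$-module $\bigoplus_k\mathfrak{g}_{\beta+k\alpha}$, the $r_\alpha$-symmetry of the unbroken string, the strict convexity of $k\mapsto\left(\beta+k\alpha\,\vert\,\beta+k\alpha\right)$ with minimum at the centre, and the criterion (valid for symmetrizable $A$) that a root is real iff its norm is positive. This correctly yields the picture ``real run of length $\ell$, imaginary interval, real run of length $\ell$''.

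The two remaining claims are where the content sits, and neither is established. First, your bound $r(\alpha,\beta)\le4$ rests on the premise that at most two real-root lengths occur in $\mathfrak{g}(A)$. This is false in general: $A_{2l}^{(2)}$ with $l\ge2$ has real roots of three distinct lengths, and an indefinite symmetrizable GCM containing a chain with $a_{i,i+1}=-1$, $a_{i+1,i}=-2$ forces $\left(\alpha_{i+1}\vert\alpha_{i+1}\right)=2\left(\alpha_{i}\vert\alpha_{i}\right)$ and hence arbitrarily many lengths. Since the norm is strictly monotone along one outer run, that run exhibits $\ell$ pairwise distinct real-root lengths, so your argument only bounds $\ell$ by the total number of real-root lengths, not by $2$. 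The standard repair is local: let $\gamma=\beta-p\alpha$ be the initial root; then $a:=\langle\gamma,\alpha^\vee\rangle=-(p+q)$ and $b:=\langle\alpha,\gamma^\vee\rangle$ are integers with $ab\ge0$ and $2\left(\gamma\vert\alpha\right)=a\left(\alpha\vert\alpha\right)=b\left(\gamma\vert\gamma\right)$, and demanding $\left(\gamma+\alpha\vert\gamma+\alpha\right)>0$ and $\left(\gamma+2\alpha\vert\gamma+2\alpha\right)>0$ forces $(a,b)$ into the finite-type rank-two patterns; this is also the only way to obtain the $C_2$ and $G_2$ assertions of cases 3 and 4, which ``enumerating the finitely many possibilities'' does not by itself produce. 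Second, the reality of the extremal roots when $\beta$ is imaginary amounts to showing that for $\gamma\in\Delta^{im}$ and $\alpha\in\Delta^{re}$ both $\gamma\pm\alpha$ are roots; you prove this only in finite and affine type and explicitly leave the indefinite case open. As it stands the proposal is an accurate reduction of the proposition to these two points rather than a proof of it.
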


\begin{enumerate}
\item $r\left(\alpha,\beta\right)=1$: $S\left(\alpha,\beta\right)=\left\{ \beta\right\} $
and $\beta$ is real.
\item $r\left(\alpha,\beta\right)=2$: The first and the last root of $S\left(\alpha,\beta\right)$
are real, intermediate roots (if they exist) are imaginary.
\item $r\left(\alpha,\beta\right)=3$: $\left|S\left(\alpha,\beta\right)\right|=3$,
all roots in $S\left(\alpha,\beta\right)$ are real and $\alpha,\beta$
span a sub-root system of type $C_{2}$.
\item $r\left(\alpha,\beta\right)=4$: The first two roots and the last
two roots of $S\left(\alpha,\beta\right)$ are real, all intermediate
roots (if they exist) are imaginary. If $S\left(\alpha,\beta\right)$
does not contain imaginary roots, then $\alpha$ and $\beta$ generate
a root system of type $G_{2}$.
\end{enumerate}
\begin{proof}
This particular form is taken from \cite{10.2748/tmj/1178225523} (prop. 1) but according to the authors parts of it appear as an exercise in \cite{Kac_book} and it is also is used implicitly in \cite{10.2748/tmj/1178227928}.
\end{proof}

For $\alpha,\beta\in\Delta^{re}(A)$ where $A$ is simply-laced, the
options are limited to the first two cases. The root string always
has to start and end with a real root and all roots in the middle
are imaginary. As simply-laced root systems of finite type do not
admit imaginary roots this shows 
\begin{equation}
\text{ad}\left(e_{\alpha}\right)^{2}\left(e_{\beta}\right)=0\ \forall\,\alpha\neq-\beta\in\Delta(A),\label{eq:double commutator vanishes in spherical simply-laced algebra}
\end{equation}
where $A$ is simply-laced and of finite type\footnote{This result is also an immediate consequence of the Serre relations and the fact that non-opposite $\alpha,\beta$ form a prenilpotent pair. For simply-laced and finite $A$ this implies that the Serre-relations hold for any pair of prenilpotent roots.}. For affine root systems
the situation is a bit more complicated but still tractable. We will analyze the example $E_9$ later in the examples section.
\begin{prop}
\label{cor:Epi Q-->classical}Let $A\in\mathbb{Z}^{n\times n}$
be a simply-laced GCM of finite type and $\mathfrak{Q}_{p,q}\left(0,\dots,0\right)$
a QMSA on $p+q=n+m$ generators $x_{1},\dots,x_{n+1}$. There exists
an epimorphism $\phi$ from $\mathfrak{Q}_{p,q}\left(0,\dots,0\right)$
to $\mathfrak{g}\left(A\right)$ defined by 
\[
\phi\left(x_{i}\right)=f_{i}\ \forall\,i=1,\dots,n,\ \phi\left(x_{n+1}\right)=e_{\theta},\ \phi\left(x_{n+j}\right)=e_{\beta_j} \forall\,2\leq j \leq m,
\]
where $f_{1},\dots,f_{n}$ denote the Chevalley generators of $\mathfrak{g}\left(A\right)$
that generate $\mathfrak{n}_{-}$, $e_{\theta}$ is the highest
root vector of $\mathfrak{g}\left(A\right)$ and $e_{\beta_j}\in\mathfrak{g}_{\beta_j}$ such that $\beta_j\in \Delta(A)\setminus\{-\theta,\alpha_1,\dots,\alpha_n\}$ and $\beta_i \neq -\beta_j\,\forall\,i\neq j$.
\end{prop}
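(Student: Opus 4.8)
(Proof sketch --- to be filled in.)

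The plan is to build $\phi$ from the universal property of the QMSA recorded just after Definition~\ref{def:QMSA}: it suffices to check that the prescribed images $y_i:=\phi(x_i)$ satisfy the defining relations~\eqref{eq:spectrum} with all spectral parameters zero, i.e. that $\Delta(y_j)=0$ for every $j=1,\dots,n+m$. Each $y_i$ lies in a single root space of $\mathfrak g(A)$: setting $\gamma_i:=-\alpha_i$ for $1\le i\le n$, $\gamma_{n+1}:=\theta$, and $\gamma_{n+j}:=\beta_j$ for $2\le j\le m$, we have $y_i=e_{\gamma_i}\in\mathfrak g_{\gamma_i}$. Hence $\Delta(y_j)$ is, up to the signs dictated by the signature $(p,q)$, a sum over $i=1,\dots,n+m$ of terms $\mathrm{ad}(e_{\gamma_i})^2(e_{\gamma_j})$, where the $i=j$ contribution already vanishes because $[y_j,y_j]=0$.

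The second step is to apply equation~\eqref{eq:double commutator vanishes in spherical simply-laced algebra}: since $A$ is simply-laced and of finite type, $\mathrm{ad}(e_{\gamma_i})^2(e_{\gamma_j})=0$ whenever $\gamma_i\neq-\gamma_j$. So $\Delta(y_j)$ can only be nonzero if two of the chosen root vectors lie in opposite root spaces, i.e. if $\gamma_i=-\gamma_j$ for some $i\neq j$ occurring in the sum.

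The third step rules this out, and the associated bookkeeping is the only point that requires care. For $A$ irreducible of finite type with $n\ge 2$ the highest root $\theta$ is non-simple, and the simple roots are pairwise distinct positive roots; therefore no one of $-\alpha_1,\dots,-\alpha_n,\theta$ is the negative of another. Every remaining candidate identity $\gamma_i=-\gamma_j$ thus involves some $\beta_k$, and each is excluded by exactly one hypothesis: $\beta_k=\alpha_i$ is forbidden by $\beta_k\notin\{\alpha_1,\dots,\alpha_n\}$; $\beta_k=-\theta$ by $\beta_k\neq-\theta$; and $\beta_k=-\beta_l$ (for $k\neq l$) by $\beta_k\neq-\beta_l$. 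So every summand of $\Delta(y_j)$ vanishes, $\Delta(y_j)=0=\mu_j y_j$ for all $j$, and $\phi\colon x_i\mapsto y_i$ extends to a Lie algebra homomorphism $\mathfrak Q_{p,q}(0,\dots,0)\to\mathfrak g(A)$.

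Surjectivity is then immediate: $\mathrm{im}(\phi)$ contains $f_1,\dots,f_n$ together with $e_\theta$, so by the lemma asserting that $e_\theta,f_1,\dots,f_n$ generate $\mathfrak g(A)$ one gets $\mathrm{im}(\phi)=\mathfrak g(A)$. I expect no genuine obstacle here: the whole weight of the argument rests on~\eqref{eq:double commutator vanishes in spherical simply-laced algebra} and on the elementary verification that the three exclusion conditions imposed on the $\beta_j$ are precisely what is needed to prevent opposite root vectors among the generators $f_1,\dots,f_n,e_\theta,e_{\beta_2},\dots,e_{\beta_m}$.
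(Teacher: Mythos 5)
Your proposal is correct and follows essentially the same route as the paper's (very terse) proof: verify the relations via equation~\eqref{eq:double commutator vanishes in spherical simply-laced algebra} and obtain surjectivity from the fact that $e_\theta,f_1,\dots,f_n$ generate $\mathfrak{g}(A)=\mathfrak{g}'(A)$ in finite type. Your version merely spells out the bookkeeping the paper leaves implicit --- namely that the exclusion hypotheses on the $\beta_j$ are exactly what prevents two generators from landing in opposite root spaces --- and correctly flags the $n\geq 2$ caveat needed for $\theta$ to be non-simple.
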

\begin{rem*}
It is not necessary for the $\beta_i$ to be different.
\end{rem*}
\begin{proof}
Equation (\ref{eq:double commutator vanishes in spherical simply-laced algebra})
implies that $\phi$ extends to a homomorphism and lemma \ref{lem:max support root implies derived subalgebra is generated}
implies that the image is all of $\mathfrak{g}\left(A\right)$, as
$\mathfrak{g}\left(A\right)=\mathfrak{g}'\left(A\right)$ for $A$
of finite type.
\end{proof}

\subsection{Involutory subalgebras}

The involutory subalgebras we consider are those according to Berman's
construction (\cite{Berman89}) where we restrict to those that do not
involve a diagram automorphism. Berman's involutions are built out
of three mutually commuting automorphisms $\eta$, $\gamma$ and $\tau$
of $\mathfrak{g}$, each of degree $2$ and defined by their action
on the $3n$ Chevalley generators via linear extension. Here, $\pi$
denotes a diagram automorphism of $\mathcal{D}(A)$ and $\tau$ is
a field automorphism of $\mathbb{K}$. For $\mathbb{K}=\mathbb{C}$,
$\tau$ can be chosen to be linear or anti-linear which results in
different subalgebras but we restrict ourselves to the linear case.
The automorphisms $\eta$ and $\gamma$ are always assumed to be $\mathbb{K}$-linear.
\begin{equation}
\eta:\,e_{i}\mapsto f_{i},\ f_{i}\mapsto e_{i},\ h_{i}\mapsto-h_{i}\ \forall\,i=1,\dots,n\label{eq:def of eta}
\end{equation}
\begin{equation}
\gamma:\ e_{i}\mapsto e_{\pi(i)},\ f_{i}\mapsto f_{\pi(i)},\ h_{i}\mapsto h_{\pi(i)}\ \forall\,i=1,\dots,n\label{eq:def of gamma}
\end{equation}
\begin{equation}
\tau:\ e_{i}\mapsto\rho_{i}e_{i},\ f_{i}\mapsto\rho_{i}^{-1}f_{i},\ h_{i}\mapsto h_{i}\ \forall\,i=1,\dots,n\label{eq:def of tau}
\end{equation}
with $\rho_{i}\in\left\{ -1,1\right\} $ and $\rho_{i}=1$ if $\pi(i)\neq i$.
According to (\cite{Berman89}) the last condition is sufficient for
$\tau$ and $\gamma$ to commute. Berman provides generators and relations
for the subalgebra $\text{Fix}_{\sigma}\left(\mathfrak{g}\right)$
where $\sigma=\eta\gamma\tau$. The generators are 
\[
x_{i}:=e_{i}+\rho_{i}f_{\pi(i)},\ z_{i}:=h_{i}-h_{\pi(i)},
\]
 so that we do not have any generators of type $z$ if we don't use
diagram automorphisms. In this case we are able to choose the signs
$\rho_{i}$ freely, so that $\text{Fix}_{\sigma}\left(\mathfrak{g}\right)$
is generated by elements of type $e_{i}\pm f_{i}$. Set 
\begin{equation}
X_{i}:=e_{i}-f_{i},\ Y_{i}:=e_{i}+f_{i}.\label{eq:X_i and Y_i}
\end{equation}
Berman provides the relations among the $x_{i}$ and $z_{i}$ in prop.
1.18 of \cite{Berman89} for the case that all $\rho_{i}=1$. If one
carries the $\rho_{i}$ along his computation one obtains modified
relations. In the simply-laced situation one has the following relations (cp. (3) of \cite{hoppe2021commuting}):
\begin{prop}
\label{lem:generators and relations of inv subs}Let $A$ be a simply-laced
GCM of full rank $n$ and $\mathfrak{g}\left(A\right)\left(\mathbb{K}\right)$.
For an involution $\sigma:=\eta\tau$ with $\eta$ as in eq. (\ref{eq:def of eta})
and $\tau$ $\mathbb{K}$-linear and as in eq. (\ref{eq:def of tau}).
Set $Z_{i}:=e_{i}+\rho_{i}f_{i}$ and denote it by $X_{i}$ if $\rho=-1$
and by $Y_{i}$ if $\rho_{i}=+1$. Then $\text{Fix}_{\sigma}\left(\mathfrak{g}\right)$
is isomorphic to the free Lie algebra generated by $Z_{1},\dots,Z_{n}$
over $\mathbb{K}$ modulo the relations
\end{prop}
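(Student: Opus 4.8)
The plan is to realize $\mathrm{Fix}_{\sigma}(\mathfrak g)$ as a quotient of the abstractly presented Lie algebra $\mathfrak L$ and then to identify the two by reducing to Berman's case $\rho_1=\dots=\rho_n=1$.

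\textbf{The surjection.} Since we allow no diagram automorphism ($\pi=\mathrm{id}$), Berman's generators $z_i=h_i-h_{\pi(i)}$ vanish and his generators $x_i=e_i+\rho_if_{\pi(i)}$ are exactly the $Z_i$; so by \cite{Berman89} the subalgebra $\mathrm{Fix}_{\sigma}(\mathfrak g)$ is generated by $Z_1,\dots,Z_n$. Using that $A$ is simply-laced, so that the Serre relations read $\mathrm{ad}(e_i)^2(e_j)=0=\mathrm{ad}(f_i)^2(f_j)$ for $i\neq j$, together with the Chevalley relations of Definition \ref{def:KMA}, a short Jacobi computation shows the elements $Z_i=e_i+\rho_if_i$ satisfy exactly the relations of the statement: $[Z_i,Z_j]=0$ when $a_{ij}=0$, and $\mathrm{ad}(Z_i)^2(Z_j)=\rho_iZ_j$ when $a_{ij}=-1$ (note that the sign that appears is governed by $\rho_i$ alone, not by $\rho_j$). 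Hence $Z_i\mapsto Z_i$ defines a surjection $q\colon\mathfrak L\twoheadrightarrow\mathrm{Fix}_{\sigma}(\mathfrak g)$.

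\textbf{Injectivity.} This is where prop. 1.18 of \cite{Berman89} enters: it establishes precisely this presentation when all $\rho_i=1$ (so $Z_i=Y_i=e_i+f_i$ and $\sigma=\eta$). To get the general case I would conjugate. For $\mathbb K=\mathbb C$ let $\psi$ be the automorphism of $\mathfrak g(A)$ with $\psi(e_i)=c_ie_i$, $\psi(f_i)=c_i^{-1}f_i$, $\psi(h_i)=h_i$, where $c_i^2=\rho_i$ (so $c_i\in\{1,i\}$); one checks $\psi\sigma\psi^{-1}=\eta$, hence $\psi$ restricts to an isomorphism $\mathrm{Fix}_{\sigma}(\mathfrak g)\to\mathrm{Fix}_{\eta}(\mathfrak g)$ carrying $Z_i$ to $c_iY_i$. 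Transporting Berman's $\rho_i=1$ relations along $\psi^{-1}$ — a rescaling of the generators by the $c_i$ — reproduces exactly the relations of $\mathfrak L$, identifying $\mathfrak L$ with $\mathrm{Fix}_{\sigma}(\mathfrak g)$ and so forcing $q$ to be injective. For $\mathbb K=\mathbb R$ the scalar $i$ is not available, so I would instead complexify: as $\sigma$ is $\mathbb R$-linear one has $\mathrm{Fix}_{\sigma}(\mathfrak g(\mathbb R))\otimes_{\mathbb R}\mathbb C=\mathrm{Fix}_{\sigma_{\mathbb C}}(\mathfrak g(A)(\mathbb C))$, and likewise $\mathfrak L(\mathbb R)\otimes_{\mathbb R}\mathbb C=\mathfrak L(\mathbb C)$ because presentations commute with base change; the complex case just treated shows $q\otimes_{\mathbb R}\mathbb C$ is an isomorphism, and $\mathbb C$ being faithfully flat over $\mathbb R$ then forces $q$ itself to be injective.

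\textbf{The expected obstacle.} The only genuinely non-formal ingredient is the \emph{completeness} of the relation list — that nothing beyond $[Z_i,Z_j]=0$ and $\mathrm{ad}(Z_i)^2(Z_j)=\rho_iZ_j$ is required — which is the content of Berman's prop. 1.18 and hence what makes the citation indispensable; granting it, the rest is sign bookkeeping. The delicate point there is that the Serre-type relation for the involutory subalgebra picks up the factor $\rho_i$ yet is insensitive to $\rho_j$, and this must be checked both in the computation that produces $q$ and in the transport along $\psi$. A self-contained alternative to the complexification step would be to reproduce Berman's proof of prop. 1.18 verbatim with $\sigma(e_i)=f_i$ replaced throughout by $\sigma(e_i)=\rho_if_i$ and the ensuing scalars tracked; this is routine but tedious, and is the computation alluded to just before the statement.
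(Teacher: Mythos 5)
Your proposal is correct, and the sign bookkeeping checks out: with $Z_i=e_i+\rho_i f_i$ one indeed gets $\operatorname{ad}(Z_i)^2(Z_j)=\rho_i\,e_j+\rho_i\rho_j\,f_j=\rho_i Z_j$ for $a_{ij}=-1$, so the factor is governed by $\rho_i$ alone, exactly as in relations (\ref{eq:def relations 2})--(\ref{eq:def relations 3}). Where you diverge from the paper is the completeness/injectivity step. The paper's proof is a one-line appeal to Berman's Theorem 1.31, which already covers general sign choices $\rho_i$, combined with the remark that carrying the $\rho_i$ through the computation of his Prop.\ 1.18 modifies the relations as stated; in other words, the paper takes the presentation for arbitrary $\rho_i$ directly from Berman and only recomputes the constants. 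You instead use Berman only in the case $\rho_1=\dots=\rho_n=1$ and reduce the general case to it by conjugating with the torus automorphism $\psi(e_i)=c_ie_i$, $\psi(f_i)=c_i^{-1}f_i$ with $c_i^2=\rho_i$ (which correctly satisfies $\psi\sigma\psi^{-1}=\eta$ and sends $Z_i\mapsto c_iY_i$), followed by complexification and faithfully flat descent to handle $\mathbb K=\mathbb R$, where the needed square roots do not exist. Both routes are valid. The paper's is shorter because it leans on the full strength of Berman's theorem; yours buys a structural explanation of why only the constants change when the $\rho_i$ vary (a rescaling of generators), at the price of the base-change detour and of having to know that fixed-point subalgebras and presentations commute with $\mathbb R\to\mathbb C$ extension --- both of which you justify adequately. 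Your closing remark correctly identifies the paper's actual argument as the ``self-contained alternative'' of tracking the scalars through Berman's computation.
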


\begin{equation}
\left[Z_{i},Z_{j}\right]=0\ \text{if }a_{ij}=0\label{eq:def relations 1}
\end{equation}
\begin{equation}
\left[X_{i},\left[X_{i},X_{j}\right]\right]=-X_{j},\ \left[X_{i},\left[X_{i},Y_{j}\right]\right]=-Y_{j}\ \text{if }a_{ij}=-1\label{eq:def relations 2}
\end{equation}
\begin{equation}
\left[Y_{i},\left[Y_{i},X_{j}\right]\right]=+X_{j},\ \left[Y_{i},\left[Y_{i},Y_{j}\right]\right]=+Y_{j}\ \text{if }a_{ij}=-1\label{eq:def relations 3}
\end{equation}

\begin{rem*}
Similar to Satake-Tits-diagrams we depict the choices of $\rho_{i}$
graphically. If $\rho_{i}=-1$, one colors the node $i$ white and
if $\rho_{i}=+1$ one colors it black.
\end{rem*}
\begin{proof}
This is theorem 1.31 of \cite{Berman89}, where one checks that with
$\rho_{i}=-1$ the relations of prop. 1.18 of \cite{Berman89} adjust
as claimed.
\end{proof}
If one wants to consider larger subalgebras by adding both $X_i$ and $Y_i$ to the generating set, it is potentially useful to know the following relations:
\begin{equation}
\left[X_{i},\left[X_{i},Y_{i}\right]\right]=-4Y_{i},\ \left[Y_{i},\left[Y_{i},X_{i}\right]\right]=+4X_{i}.\label{eq:additional relation}
\end{equation}
Describe $\text{Fix}_{\sigma}\left(\mathfrak{g}\right)$ as being
generated by $\chi:=\left\{ Z_{1},\dots,Z_{l}\right\} $ with $Z_{i}=X_{i}$
or $Y_{i}$ so that one can write 
\[
\Delta_{\chi}=\sum_{i=1}^{n-m}\left[Z_{i},\left[Z_{i},\cdot\right]\right]-\sum_{i=n-m+1}^{n}\left[Z_{i},\left[Z_{i},\cdot\right]\right]\ .
\]
For a simply-laced GCM $A$ and a signature $(n,0)$ one arrives
at the following identities for the action of $\Delta$: 
\begin{equation}
\Delta\left(Z_{i}\right)=\left(n_{black}(i)-n_{white}(i)\right)Z_{i},\label{eq:Delta action}
\end{equation}
where $n_{c}(i)$ denotes the number of neighbors of node $i$ with
color $c$.
\begin{prop}
\label{prop:condition for hom}Let $A$ be a simply-laced generalized
Cartan matrix and $\sigma:=\eta\tau$ an involution as defined by
eqs. (\ref{eq:def of eta}) and (\ref{eq:def of tau}) with $\tau$
$\mathbb{K}$-linear. There exists an epimorphism $\mathfrak{Q}_{n,0}\left(\mu_{1},\dots,\mu_{n}\right)\rightarrow\text{Fix}_{\sigma}\left(\mathfrak{g}\left(A\right)\right)$
given by sending the generators $x_{1},\dots,x_{n}$ of $\mathfrak{Q}_{n,0}\left(\mu_{1},\dots,\mu_{n}\right)$
to the Berman generators $Z_{1},\dots,Z_{n}$ (see \ref{eq:X_i and Y_i})
of $\text{Fix}_{\sigma}\left(\mathfrak{g}\left(A\right)\right)$ if
the coloring which defines $\tau$ is such that
\begin{equation}
\mu_{i}=n_{black}(i)-n_{white}(i)\ \forall\,i=1,\dots,n.\label{eq:condition for hom}
\end{equation}
\end{prop}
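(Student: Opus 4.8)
The plan is to use the universal property of the QMSA recorded right after Definition \ref{def:QMSA}: a Lie algebra homomorphism $\mathfrak{Q}_{n,0}\left(\mu_{1},\dots,\mu_{n}\right)\rightarrow\text{Fix}_{\sigma}\left(\mathfrak{g}\left(A\right)\right)$ sending $x_{i}\mapsto Z_{i}$ exists as soon as the Berman generators $Z_{1},\dots,Z_{n}$ satisfy the defining relations \eqref{eq:spectrum} of $\mathfrak{Q}_{n,0}\left(\mu_{1},\dots,\mu_{n}\right)$ for signature $(n,0)$; that is, as soon as $\sum_{i=1}^{n}\left[Z_{i},\left[Z_{i},Z_{j}\right]\right]=\mu_{j}Z_{j}$ for every $j$, i.e. $\Delta\left(Z_{j}\right)=\mu_{j}Z_{j}$. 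So the entire content of the proof is the computation of $\Delta\left(Z_{j}\right)$, which I would carry out using the relations of Proposition \ref{lem:generators and relations of inv subs}.

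First I would discard the summands that contribute nothing: the term $i=j$ is $\left[Z_{j},\left[Z_{j},Z_{j}\right]\right]=0$, and by relation \eqref{eq:def relations 1} one has $\left[Z_{i},Z_{j}\right]=0$ whenever $a_{ij}=0$, so the only contributions to $\Delta\left(Z_{j}\right)$ come from the neighbours $i$ of the node $j$ in $\mathcal{D}(A)$, for which $a_{ij}=-1$. For such an $i$ I would split by the colour of node $i$: if $i$ is black then $Z_{i}=Y_{i}$ and relation \eqref{eq:def relations 3} gives $\left[Y_{i},\left[Y_{i},Z_{j}\right]\right]=+Z_{j}$ irrespective of whether $Z_{j}$ is $X_{j}$ or $Y_{j}$; if $i$ is white then $Z_{i}=X_{i}$ and relation \eqref{eq:def relations 2} gives $\left[X_{i},\left[X_{i},Z_{j}\right]\right]=-Z_{j}$. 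Summing over all neighbours yields $\Delta\left(Z_{j}\right)=\left(n_{black}(j)-n_{white}(j)\right)Z_{j}$, which is precisely equation \eqref{eq:Delta action}. Under hypothesis \eqref{eq:condition for hom} this equals $\mu_{j}Z_{j}$ for every $j$, so $\phi$ is a well-defined homomorphism. Surjectivity is then immediate: by Proposition \ref{lem:generators and relations of inv subs} the elements $Z_{1},\dots,Z_{n}$ generate $\text{Fix}_{\sigma}\left(\mathfrak{g}\left(A\right)\right)$, and they all lie in the image of $\phi$.

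I do not expect a genuine obstacle here: the argument is essentially bookkeeping resting on the presentation already established in Proposition \ref{lem:generators and relations of inv subs}. The only points that require a little care are (i) correctly removing the $i=j$ summand and the summands indexed by non-neighbours before applying the degree-two relations, and (ii) reading $\mathfrak{g}(A)$ as the derived algebra $\mathfrak{g}'(A)$ in the case where $A$ is not of full rank, since that is the regime in which Berman's presentation and Proposition \ref{lem:generators and relations of inv subs} are formulated; neither point affects the value of $\Delta\left(Z_{j}\right)$.
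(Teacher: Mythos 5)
Your proposal is correct and follows the same route as the paper: verify the defining relations \eqref{eq:spectrum} via the identity $\Delta(Z_j)=\left(n_{black}(j)-n_{white}(j)\right)Z_j$ of equation \eqref{eq:Delta action} and conclude surjectivity from the fact that the image contains all Berman generators. The only difference is that the paper simply cites \eqref{eq:Delta action}, whereas you additionally spell out its (correct) derivation from the relations of Proposition \ref{lem:generators and relations of inv subs}.
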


\begin{proof}
One applies equation (\ref{eq:Delta action}) to check that the relations
(\ref{eq:spectrum}) are satisfied. Thus, the map 
\[
x_{i}\mapsto Z_{i}\ \forall\,i=1,\dots,n
\]
 extends to a homomorphism of Lie algebras. Since the image contains
all Berman generators $Z_{1},\dots,Z_{n}$ this homomorphism is surjective.
\end{proof}
From \cite{HKL15} one knows that $\text{Fix}_{\omega}\left(\mathfrak{g}\left(A\right)\left(\mathbb{R}\right)\right)$
has a finite-dimensional representation ($\omega$ denotes the Chevalley
involution), called the spin-$\frac{1}{2}$ representation. We now
claim that this representation also provides representations for arbitrary
$\rho_{i}=\pm1$.
\begin{prop}
\label{prop:1/2 spin rep}Let $A$ be a simply-laced generalized Dynkin
diagram, $\mathfrak{g}\left(A\right)$ the corresponding split-real
Kac-Moody algebra and consider the involutory subalgebra $\text{Fix}_{\sigma}\left(\mathfrak{g}\left(A\right)\right)$,
where $\sigma=\eta\tau$ with $\eta$ and $\tau$ defined in (\ref{eq:def of eta})
(\ref{eq:def of tau}). Then there exists a f.d. representation $\phi$
of $\text{Fix}_{\sigma}\left(\mathfrak{g}\left(A\right)\right)$ whose
representation matrices of the Berman generators $Z_{1},\dots,Z_{n}$
of $\text{Fix}_{\sigma}\left(\mathfrak{g}\left(A\right)\right)$ satisfy
the following equations:
\begin{eqnarray*}
\phi\left(Z_{i}\right)^{2} & = & \frac{\rho_{i}}{4}Id\\
\left[\phi\left(Z_{i}\right),\phi\left(Z_{j}\right)\right] & = & 0\ \ \text{if }a_{ij}=0\\
\left\{ \phi\left(Z_{i}\right),\phi\left(Z_{j}\right)\right\}  & = & 0\ \ \text{if }a_{ij}=-1,
\end{eqnarray*}
where $\left\{ A,B\right\} :=AB+BA$ denotes the anti-commutator and $\rho_i\in\{\pm1\}$ denotes the sign which determines $Z_i=e_i+\rho_i f_i$.
\end{prop}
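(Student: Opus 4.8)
The plan is to obtain $\phi$ by twisting the spin-$\tfrac{1}{2}$ representation $\phi_{0}$ of $\mathfrak{k}(A)=\text{Fix}_{\omega}\bigl(\mathfrak{g}(A)(\mathbb{R})\bigr)$ supplied by \cite{HKL15} by a diagonal automorphism of $\mathfrak{g}(A)$. Note first that $\phi_{0}$ is exactly the instance $\rho_{i}\equiv-1$ of the assertion: there $Z_{i}=X_{i}=e_{i}-f_{i}$, $\tfrac{\rho_{i}}{4}=-\tfrac14$, and $\phi_{0}$ satisfies $\phi_{0}(X_{i})^{2}=-\tfrac14\,\mathrm{Id}$ (the $\mathfrak{sl}_{2}$-block $e_{i}-f_{i}$ acts by $\tfrac12\gamma_{i}$ with $\gamma_{i}^{2}=-\mathrm{Id}$), together with $[\phi_{0}(X_{i}),\phi_{0}(X_{j})]=0$ if $a_{ij}=0$ and $\{\phi_{0}(X_{i}),\phi_{0}(X_{j})\}=0$ if $a_{ij}=-1$. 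For an arbitrary sign vector $(\rho_{i})$ I would fix scalars $\varepsilon_{i}\in\{1,i\}\subset\mathbb{C}$ with $\varepsilon_{i}^{2}=-\rho_{i}$, and let $\psi$ be the $\mathbb{C}$-linear automorphism of $\mathfrak{g}(A)(\mathbb{C})$ determined on Chevalley generators by $e_{i}\mapsto\varepsilon_{i}e_{i}$, $f_{i}\mapsto\varepsilon_{i}^{-1}f_{i}$, $h_{i}\mapsto h_{i}$; since for any nonzero $\lambda_{i}$ the assignment $e_{i}\mapsto\lambda_{i}e_{i}$, $f_{i}\mapsto\lambda_{i}^{-1}f_{i}$, $h_{i}\mapsto h_{i}$ preserves all relations of Definition \ref{def:KMA}, this $\psi$ is well defined (it is the Lie-algebra shadow of conjugation by a torus element of the Kac--Moody group).

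The crucial identity is $\sigma=\psi\circ\omega\circ\psi^{-1}$ as automorphisms of $\mathfrak{g}(A)(\mathbb{C})$, where $\sigma=\eta\tau$ with $\eta,\tau$ as in (\ref{eq:def of eta}), (\ref{eq:def of tau}). Since $e_{i},f_{i},h_{i}$ generate, it suffices to check this on them: e.g.\ $\psi\omega\psi^{-1}(e_{i})=\psi\bigl(\omega(\varepsilon_{i}^{-1}e_{i})\bigr)=\psi(-\varepsilon_{i}^{-1}f_{i})=-\varepsilon_{i}^{-2}f_{i}=\rho_{i}f_{i}=\sigma(e_{i})$, and the checks on $f_{i}$ and $h_{i}$ are equally short. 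Consequently $\psi$ restricts to a Lie-algebra isomorphism $\mathfrak{k}(A)(\mathbb{C})=\text{Fix}_{\omega}(\mathfrak{g}(A)(\mathbb{C}))\xrightarrow{\sim}\text{Fix}_{\sigma}(\mathfrak{g}(A)(\mathbb{C}))$ (here one uses that $\omega$ and $\sigma$ are $\mathbb{C}$-linear and defined over $\mathbb{R}$, so that taking fixed points commutes with extension of scalars), and one reads off $\psi^{-1}(Z_{i})=\varepsilon_{i}^{-1}e_{i}+\rho_{i}\varepsilon_{i}f_{i}=\varepsilon_{i}^{-1}(e_{i}-f_{i})=\varepsilon_{i}^{-1}X_{i}$, the middle step using $\rho_{i}\varepsilon_{i}^{2}=-1$.

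I would then put $\phi:=\phi_{0}^{\mathbb{C}}\circ\psi^{-1}$, with $\phi_{0}^{\mathbb{C}}$ the $\mathbb{C}$-linear extension of $\phi_{0}$ to $\mathfrak{k}(A)(\mathbb{C})$, and restrict $\phi$ to the real subalgebra $\text{Fix}_{\sigma}(\mathfrak{g}(A)(\mathbb{R}))\subseteq\text{Fix}_{\sigma}(\mathfrak{g}(A)(\mathbb{C}))$; this is a finite-dimensional representation of $\text{Fix}_{\sigma}(\mathfrak{g}(A))$ on the same space as $\phi_{0}$. The three claimed relations then come for free: $\phi(Z_{i})=\phi_{0}^{\mathbb{C}}(\varepsilon_{i}^{-1}X_{i})=\varepsilon_{i}^{-1}\phi_{0}(X_{i})$, hence $\phi(Z_{i})^{2}=\varepsilon_{i}^{-2}\phi_{0}(X_{i})^{2}=(-\rho_{i})\cdot(-\tfrac14)\,\mathrm{Id}=\tfrac{\rho_{i}}{4}\,\mathrm{Id}$, while $[\phi(Z_{i}),\phi(Z_{j})]$ and $\{\phi(Z_{i}),\phi(Z_{j})\}$ are nonzero scalar multiples of $[\phi_{0}(X_{i}),\phi_{0}(X_{j})]$ and $\{\phi_{0}(X_{i}),\phi_{0}(X_{j})\}$ and thus vanish when $a_{ij}=0$, respectively $a_{ij}=-1$. (Alternatively, and avoiding the conjugation entirely, one may verify directly that the matrices $\varepsilon_{i}^{-1}\phi_{0}(X_{i})$ obey the Berman presentation of $\text{Fix}_{\sigma}$ from Proposition \ref{lem:generators and relations of inv subs}, the double-commutator relations there following from the above Clifford-type relations of $\phi_{0}$ via the identity $[M,[M,N]]=4M^{2}N$ valid whenever $M^{2}$ is scalar and $\{M,N\}=0$.)

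I expect the only real content, beyond citing \cite{HKL15}, to be the identity $\sigma=\psi\circ\omega\circ\psi^{-1}$, which is the three-line verification above; everything else is bookkeeping with the $\varepsilon_{i}$. The one point worth flagging is reality: $\psi$ is genuinely complex as soon as some $\rho_{i}=+1$ (one needs $\varepsilon_{i}=i$), so this argument produces a complex representation in general --- which is all the statement asks for --- and whether a split-real model exists is a separate matter that the twist does not decide; one would presumably have to adapt the Clifford-algebra construction underlying $\phi_{0}$ to the indefinite bilinear form encoded by the coloring.
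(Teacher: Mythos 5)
Your proposal is correct, and it ends up with essentially the same matrices as the paper: $\phi(Z_i)=\varepsilon_i^{-1}\phi_0(X_i)$ equals $A_i$ when $\rho_i=-1$ and $\mp iA_i$ when $\rho_i=+1$, whereas the paper keeps $A_i$ and sets $B_i:=iA_i$; the sign is immaterial for all three relations. What differs is how the two arguments certify that these matrices actually represent $\text{Fix}_{\sigma}\left(\mathfrak{g}\left(A\right)\right)$. The paper works entirely inside Berman's presentation: it quotes remark 3.7 and theorem 3.9 of \cite{HKL15} for the Clifford-type relations of the $A_i$, and then verifies by direct computation that the assignment $X_i\mapsto A_i$, $Y_i\mapsto B_i$ satisfies the defining relations (\ref{eq:def relations 1})--(\ref{eq:def relations 3}) of Proposition \ref{lem:generators and relations of inv subs} --- which is exactly the computation you relegate to your parenthetical alternative via $[M,[M,N]]=4M^2N$. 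Your primary route instead proves $\sigma=\psi\circ\omega\circ\psi^{-1}$ for a diagonal automorphism $\psi$ and transports the known spin-$\frac{1}{2}$ representation of $\mathfrak{k}(A)$ along $\psi^{-1}$. That is more conceptual --- it explains structurally why the colorings are invisible over $\mathbb{C}$ and why only the scalars $\phi(Z_i)^2$ change --- but it costs you a detour through $\mathfrak{g}(A)(\mathbb{C})$ and the (easy but necessary) observation that taking fixed points of a $\mathbb{C}$-linear involution defined over $\mathbb{R}$ commutes with extension of scalars; the paper's direct check needs neither Berman's presentation to be conjugation-invariant nor any base change. Your closing caveat about reality is apt and consistent with the paper: the paper's $B_i=iA_i$ are just as genuinely complex as your $\varepsilon_i=i$, and the remark following the paper's proof (the $X_i$ land on anti-hermitian, the $Y_i$ on hermitian matrices) is precisely the residue of that fact.
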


\begin{proof}
For $\mathcal{D}\left(A\right)$ simply-laced and of rank $n$, remark
3.7 and theorem 3.9 of \cite{HKL15} provide the existence
of $n$ matrices $A_{1},\dots,A_{n}$ which satisfy 
\begin{eqnarray*}
(i)\quad & A_{i}^{2}=-\frac{1}{4}Id & \text{for all }i=1,\dots,n\\
(ii)\quad & \left[A_{i},A_{j}\right]=0 & \text{if }a_{ij}=0\\
(iii)\quad & \left\{ A_{i},A_{j}\right\} =0 & \text{if }a_{ij}=-1.
\end{eqnarray*}
Now the matrices $B_{i}:=I\cdot A_{i}$, where $I=\sqrt{-1}$ also
satisfy $(ii)$ and $(iii)$ but one has 
\[
B_{i}^{2}=+\frac{1}{4}Id.
\]
The question now is, if 
\[
\rho\left(Z_{i}\right)=\begin{cases}
A_{i} & \text{if }Z_{i}=X_{i}\\
B_{i} & \text{if }Z_{i}=Y_{i}
\end{cases}
\]
extends to a homomorphism of Lie algebras. Towards this, relations
(\ref{eq:def relations 1}) and (\ref{eq:def relations 2}) are satisfied
by linearity in $Y_{i}$. Towards (\ref{eq:def relations 3}) compute
for $a_{ij}=-1$ that
\begin{eqnarray*}
\left[\rho\left(Y_{i}\right),\left[\rho\left(Y_{i}\right),\rho\left(Z_{j}\right)\right]\right] & = & \rho\left(Y_{i}\right)^{2}\rho\left(Z_{j}\right)-2\rho\left(Y_{i}\right)\rho\left(Z_{j}\right)\rho\left(Y_{i}\right)+\rho\left(Z_{j}\right)\rho\left(Y_{i}\right)^{2}\\
 & = & B_{i}^{2}\rho\left(Z_{j}\right)-2B_{i}\rho\left(Z_{j}\right)B_{i}+\rho\left(Z_{j}\right)B_{i}^{2}\\
 & = & \frac{1}{2}\rho\left(Z_{j}\right)+2B_{i}^{2}\rho\left(Z_{j}\right)=\rho\left(Z_{j}\right),
\end{eqnarray*}
where $\rho\left(Z_{j}\right)B_{i}=-B_{i}\rho\left(Z_{j}\right)$
follows from the fact that $\left\{ A_{i},A_{j}\right\} =0$ implies
$\left\{ A_{i},I\cdot A_{j}\right\} =0$ because the anti-commutator
is bilinear. Thus, (\ref{eq:def relations 3}) is satisfied.
\end{proof}
\begin{rem*}
Theorem 3.14 of \cite{HKL15} shows that $\left\langle A_{1},\dots,A_{n}\right\rangle _{\mathbb{K}}$
is compact and therefore this representation maps the $X_{i}=e_{i}-f_{i}$
to anti-hermitian matrices while the $Y_{i}=e_{i}+f_{i}$ are mapped
to hermitian ones.
\end{rem*}

\subsection{Involutory subalgebras of affine Kac-Moody-algebras}\label{subsec:affine_subs}

For $A$ a classical Cartan matrix of rank $n$, the real loop algebra
$\mathfrak{L}\left(\mathfrak{g}\left(A\right)\right)$ is defined
as the $\mathbb{R}$-tensor product of the ring of Laurent-polynomials
$\mathfrak{L}:=\mathbb{R}\left[t,t^{-1}\right]$ with the classical
split-real Lie algebra $\mathfrak{g}\left(A\right)\left(\mathbb{R}\right)$.
Explicitly, 
\begin{equation}
\mathfrak{L}\left(\mathfrak{g}\left(A\right)\right):=\mathfrak{L}\otimes_{\mathbb{R}}\mathfrak{g}\left(A\right)\label{eq:def of loop algebra}
\end{equation}
as vector spaces and the Lie bracket is given via 
\begin{equation}
\left[P\otimes x,Q\otimes y\right]=\left(P\cdot Q\right)\otimes\left[x,y\right],\label{eq:bracket in loop algebra}
\end{equation}
where the bracket on the right hand side is the Lie bracket in $\mathfrak{g}\left(A\right)$
and $\cdot$ denotes the multiplication of Laurent polynomials. Denote
the Chevalley involution on $\mathfrak{g}\left(A\right)$ by $\mathring{\omega}$,
i.e., let $\mathring{\omega}$ denote the involution that is determined
by its action on the Chevalley generators $e_{i},f_{i},h_{i}$ of
$\mathfrak{g}\left(A\right)$: 
\[
\mathring{\omega}\left(e_{i}\right)=-f_{i},\ \mathring{\omega}\left(f_{i}\right)=-e_{i},\ \mathring{\omega}\left(h_{i}\right)=-h_{i}\ \forall\,i=1,\dots,n.
\]
In the terminology of eqs. (\ref{eq:def of eta}), (\ref{eq:def of gamma})
and (\ref{eq:def of tau}) this corresponds to the situation where
$\gamma$ is trivial and $\tau$ is such that each sign $\rho_{i}=-1$.
Denote by $\theta$ the highest root of $\Delta\left(A\right)$ and
pick a normalized vector $e_{\theta}\in\mathfrak{g}_{\theta}$. With
$\mathring{\omega}$ the Cartan involution of $\mathfrak{g}\left(A\right)$
and $e_{-\theta}:=-\mathring{\omega}\left(e_{\theta}\right)$, set
\begin{equation}
e_{0}:=t\otimes e_{-\theta},\ f_{0}:=t^{-1}\otimes e_{\theta},\label{eq:def of e_0 and f_0}
\end{equation}
then 
\[
\mathfrak{L}\left(\mathfrak{g}\left(A\right)\right)=\left\langle e_{0},f_{0},\mathfrak{g}\left(A\right)\right\rangle .
\]
As vector spaces the affine extension $\mathfrak{g}\left(\widetilde{A}\right)$
of $\mathfrak{g}\left(A\right)$ is given as 
\[
\mathfrak{g}\left(\widetilde{A}\right)=\mathfrak{L}\left(\mathfrak{g}\left(A\right)\right)\oplus\mathbb{K}\cdot K\oplus\mathbb{K}\cdot d,
\]
where $K$ and $d$ are the missing elements in the Cartan subalgebra
of $\mathfrak{g}\left(\widetilde{A}\right)$ and the above $e_{0},f_{0}$
coincide with the Chevalley generators of $\mathfrak{g}\left(\widetilde{A}\right)$.
The precise nature of $K$ and $d$ is not important to us, since
we are interested in the following statement:
\begin{lem}
Let $\mathfrak{g}\left(\widetilde{A}\right)$ be the affine extension
of $\mathfrak{g}\left(A\right)$ and let $\sigma$ be an involution
as in eqs.(\ref{eq:def of eta}), (\ref{eq:def of gamma}) and (\ref{eq:def of tau})
but such that $\gamma$ is trivial and $\tau$ is $\mathbb{K}$-linear.
Then $\text{Fix}_{\sigma}\left(\mathfrak{g}\left(\widetilde{A}\right)\right)\subset\mathfrak{L}\left(\mathfrak{g}\left(A\right)\right)$.
\end{lem}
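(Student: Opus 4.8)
The plan is to show that the involution $\sigma=\eta\tau$ acts on $\mathfrak{g}(\widetilde A)$ in a way that decomposes compatibly with the direct sum $\mathfrak{g}(\widetilde A)=\mathfrak{L}(\mathfrak{g}(A))\oplus\mathbb{K}K\oplus\mathbb{K}d$, and that its restriction to the central-and-derivation part $\mathbb{K}K\oplus\mathbb{K}d$ has no fixed points other than $0$. Granting this, any fixed vector decomposes as $v=v_0+\lambda K+\mu d$ with $\sigma(v_0)\in\mathfrak{L}(\mathfrak{g}(A))$ and $\sigma(\lambda K+\mu d)\in\mathbb{K}K\oplus\mathbb{K}d$, so $\sigma(v)=v$ forces $\lambda K+\mu d$ to be fixed, hence zero; thus $\mathrm{Fix}_\sigma(\mathfrak{g}(\widetilde A))\subseteq\mathfrak{L}(\mathfrak{g}(A))$.

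First I would record how $\sigma$ acts on the Chevalley generators $e_0,f_0,h_0$ of the affine algebra. By hypothesis $\gamma$ is trivial and $\tau$ is $\mathbb{K}$-linear with signs $\rho_i$, so on the finite part $\sigma$ sends $e_i\mapsto\rho_i f_i$, $f_i\mapsto\rho_i^{-1}e_i$, $h_i\mapsto-h_i$; extending over the loop variable, $\sigma$ fixes $t$ (it is a $\mathbb{K}$-linear automorphism induced from one on $\mathfrak{g}(A)$ together with the identity on $\mathfrak{L}$, combined with $\tau$ acting trivially on $\mathfrak{L}$). Using $e_0=t\otimes e_{-\theta}$ and $f_0=t^{-1}\otimes e_\theta$ from eq.~(\ref{eq:def of e_0 and f_0}), and the fact that $e_{-\theta}=-\mathring\omega(e_\theta)$, one checks that $\sigma$ permutes $e_0$ and (a sign times) $f_0$ up to the same kind of sign $\rho_0$ as for the other nodes — in particular $\sigma$ preserves $\mathfrak{L}(\mathfrak{g}(A))$, since that subspace is generated by $e_0,f_0$ together with $\mathfrak{g}(A)$ and $\sigma$ maps each of these generators back into $\mathfrak{L}(\mathfrak{g}(A))$. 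So $\mathfrak{L}(\mathfrak{g}(A))$ is a $\sigma$-invariant subspace.

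Next I would pin down the action of $\sigma$ on $K$ and $d$. The element $K$ is central and, in the standard affine presentation, $K=\sum c_i h_i$ with all $c_i>0$ (the marks of the diagram); since $\sigma(h_i)=-h_i$ and $\sigma$ is linear, $\sigma$ acts on the span of $h_0,\dots,h_n$ — which contains $K$ — and we get $\sigma(K)=-K$. For $d$, one uses that $[d,e_0]=e_0$ while $[\sigma(d),\sigma(e_0)]=\sigma(e_0)$; combined with $\sigma(e_0)$ lying in the degree-$\mp1$ loop space and $[d,\cdot]$ measuring loop degree, one deduces $\sigma(d)=-d+(\text{something in }\mathfrak{h}\cap\mathfrak{L})$, and in any case $\sigma$ acts on $\mathbb{K}K\oplus\mathbb{K}d$ modulo $\mathfrak{L}(\mathfrak{g}(A))$ as $-\mathrm{Id}$. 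Hence no nonzero element of $\mathbb{K}K\oplus\mathbb{K}d$ survives to the quotient as a fixed vector, which is exactly what the first paragraph needs.

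The main obstacle is the treatment of $d$: unlike $K$, the derivation is not canonically normalized and $\sigma(d)$ may differ from $-d$ by a Cartan element, so one must argue at the level of the quotient $\mathfrak{g}(\widetilde A)/\mathfrak{L}(\mathfrak{g}(A))$ rather than claiming $\sigma(d)=-d$ on the nose. Once it is phrased that way — $\sigma$ preserves $\mathfrak{L}(\mathfrak{g}(A))$ and induces $-\mathrm{Id}$ on the two-dimensional quotient — the conclusion is immediate because $-\mathrm{Id}$ has trivial fixed space. I would also double-check that $\sigma$ genuinely restricts to an automorphism of $\mathfrak{L}(\mathfrak{g}(A))$ rather than merely a linear endomorphism, which follows since $\sigma$ is an automorphism of $\mathfrak{g}(\widetilde A)$ carrying a generating set of the subalgebra $\mathfrak{L}(\mathfrak{g}(A))$ into that subalgebra.
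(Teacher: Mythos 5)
Your proposal is correct and takes essentially the same route as the paper, whose entire proof is the observation that trivial $\gamma$ forces $\sigma$ to act as $-\mathrm{Id}$ on all of $\mathfrak{h}(\widetilde{A})$, so that $K$ and $d$ are not fixed. Your additional care about the $\sigma$-invariance of $\mathfrak{L}(\mathfrak{g}(A))$ and about the normalization of $d$ (arguing on the two-dimensional quotient rather than claiming $\sigma(d)=-d$ outright) merely makes explicit steps the paper leaves implicit.
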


\begin{proof}
If $\gamma$ is trivial then $\sigma\left(h\right)=-h$ for all $h\in\mathfrak{h}\left(\widetilde{A}\right)$,
in particular $\sigma\left(K\right)=-K$ and $\sigma\left(d\right)=-d$.
Thus, these elements are not fixed by $\sigma$ and therefore not
contained in $\text{Fix}_{\sigma}\left(\mathfrak{g}\left(\widetilde{A}\right)\right)$.
\end{proof}
So it suffices to study the loop algebra to understand $\text{Fix}_{\sigma}\left(\mathfrak{g}\left(\widetilde{A}\right)\right)$
and we would like to describe its structure in a little bit more detail.
\begin{lem}
\label{lem:structure of involutory subalgebras}Let $\mathring{\sigma}:\mathfrak{g}\left(A\right)\rightarrow\mathfrak{g}(A)$
be an involution of Berman type with trivial $\gamma$, $\mathbb{K}$-linear
$\tau$ and denote by $\beta_{\pm}:\mathfrak{L}\rightarrow\mathfrak{L}$
the ring involution determined by $t\mapsto\pm t^{-1}$. Then $\sigma:=\beta_{\pm}\otimes\mathring{\sigma}$
is an involution of the loop algebra $\mathfrak{L}\left(\mathfrak{g}(A)\right)$.
It is also of Berman type with $\rho_{0}=\pm\varepsilon$ iff $\mathring{\sigma}\left(e_{\theta}\right)=\varepsilon e_{-\theta}$
(after restriction from $\mathfrak{g}\left(\widetilde{A}\right)$
to $\mathfrak{L}\left(\mathfrak{g}(A)\right)$ ). Set 
\[
\mathfrak{L}^{\pm}:=\left\{ p\in\mathfrak{L}\,\vert\,\beta_{\pm}(p)=\pm p\right\} ,\ \mathfrak{s}^{\pm}:=\left\{ x\in\mathfrak{g}\left(A\right)\,\vert\,\mathring{\sigma}(x)=\pm x\right\} ,
\]
where the $\pm$ signs in the first set are independent of $\beta_{\pm}$,
then 
\begin{equation}
\text{Fix}_{\sigma}\left(\mathfrak{g}\left(\widetilde{A}\right)\right)=\mathfrak{L}^{+}\otimes\mathfrak{s}^{+}\oplus\mathfrak{L}^{-}\otimes\mathfrak{s}^{-}.\label{eq:split of Fix-sub}
\end{equation}
Furthermore, $\mathfrak{L}^{+}\otimes\mathfrak{s}^{+}$ is a subalgebra,
$\mathfrak{L}^{-}\otimes\mathfrak{s}^{-}$ a $\mathfrak{L}^{+}\otimes\mathfrak{s}^{+}$-module
and $\left[\mathfrak{L}^{-}\otimes\mathfrak{s}^{-},\mathfrak{L}^{-}\otimes\mathfrak{s}^{-}\right]$
$\subset\mathfrak{L}^{+}\otimes\mathfrak{s}^{+}$.
\end{lem}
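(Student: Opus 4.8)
The plan is to verify the three defining properties of a Berman-type involution for $\sigma := \beta_{\pm}\otimes\mathring\sigma$, then read off the decomposition (\ref{eq:split of Fix-sub}) directly from eigenspace bookkeeping, and finally check the algebra/module structure by bilinearity of the bracket. First I would confirm that $\sigma$ is a well-defined Lie algebra involution on $\mathfrak{L}(\mathfrak{g}(A))$: since $\beta_{\pm}$ is a ring involution of $\mathfrak{L}$ and $\mathring\sigma$ is a Lie algebra involution of $\mathfrak{g}(A)$, the map $\sigma(P\otimes x) = \beta_{\pm}(P)\otimes\mathring\sigma(x)$ satisfies $\sigma^2 = \mathrm{id}$ and respects (\ref{eq:bracket in loop algebra}), because $\beta_{\pm}(PQ) = \beta_{\pm}(P)\beta_{\pm}(Q)$ and $\mathring\sigma[x,y] = [\mathring\sigma x,\mathring\sigma y]$. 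Next I would check that $\sigma$ extends to $\mathfrak{g}(\widetilde A)$ and is again of Berman type: on the $3(n+1)$ Chevalley generators of $\mathfrak{g}(\widetilde A)$, $\sigma$ acts on $e_i,f_i,h_i$ ($1\le i\le n$) exactly as $\mathring\sigma$ does (so with signs $\rho_i$), while on the affine generators one computes, using (\ref{eq:def of e_0 and f_0}),
\[
\sigma(e_0) = \sigma(t\otimes e_{-\theta}) = (\pm t^{-1})\otimes\mathring\sigma(e_{-\theta}).
\]
Since $e_{-\theta} = -\mathring\omega(e_\theta)$ and $\mathring\sigma$ differs from $\mathring\omega$ only by the $\tau$-twist, one has $\mathring\sigma(e_{-\theta}) = \varepsilon' e_\theta$ for a sign $\varepsilon'$ tied to $\mathring\sigma(e_\theta) = \varepsilon e_{-\theta}$; combining, $\sigma(e_0) = \pm\varepsilon'(t^{-1}\otimes e_\theta) = \pm\varepsilon' f_0$, and similarly $\sigma(f_0) = \pm\varepsilon' e_0$. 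This identifies $\rho_0 = \pm\varepsilon$ with the sign as claimed (the $\varepsilon$ versus $\varepsilon'$ discrepancy is just a bookkeeping of how $\mathring\omega$ normalizes $e_{\pm\theta}$, which I would pin down with one line using $\mathring\omega^2=\mathrm{id}$).

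For the decomposition, the key observation is that $\mathfrak{L} = \mathfrak{L}^+\oplus\mathfrak{L}^-$ and $\mathfrak{g}(A) = \mathfrak{s}^+\oplus\mathfrak{s}^-$ are eigenspace decompositions for $\beta_{\pm}$ and $\mathring\sigma$ respectively, hence $\mathfrak{L}(\mathfrak{g}(A)) = \bigoplus_{\epsilon,\delta\in\{\pm\}}\mathfrak{L}^\epsilon\otimes\mathfrak{s}^\delta$ is the simultaneous eigenspace decomposition, and $\sigma$ acts on $\mathfrak{L}^\epsilon\otimes\mathfrak{s}^\delta$ by the scalar $\epsilon\delta$. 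Therefore $\mathrm{Fix}_\sigma = \mathfrak{L}^+\otimes\mathfrak{s}^+\oplus\mathfrak{L}^-\otimes\mathfrak{s}^-$, which is (\ref{eq:split of Fix-sub}) once one invokes the preceding lemma to discard $K$ and $d$. The structure statements then follow from the multiplication rules on signs: $\mathfrak{L}^\epsilon\cdot\mathfrak{L}^{\epsilon'}\subset\mathfrak{L}^{\epsilon\epsilon'}$ and $[\mathfrak{s}^\delta,\mathfrak{s}^{\delta'}]\subset\mathfrak{s}^{\delta\delta'}$ (the latter because $\mathfrak{s}^+$ is a subalgebra and $\mathfrak{s}^-$ is the $-1$-eigenspace of an involutive automorphism). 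Applying (\ref{eq:bracket in loop algebra}), $[\mathfrak{L}^+\otimes\mathfrak{s}^+,\mathfrak{L}^+\otimes\mathfrak{s}^+]\subset\mathfrak{L}^+\otimes\mathfrak{s}^+$, $[\mathfrak{L}^+\otimes\mathfrak{s}^+,\mathfrak{L}^-\otimes\mathfrak{s}^-]\subset\mathfrak{L}^-\otimes\mathfrak{s}^-$, and $[\mathfrak{L}^-\otimes\mathfrak{s}^-,\mathfrak{L}^-\otimes\mathfrak{s}^-]\subset\mathfrak{L}^+\otimes\mathfrak{s}^+$, giving all three assertions at once.

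The only genuinely delicate point is the second sentence of the lemma, namely matching the sign $\rho_0$ with $\varepsilon$ defined through $\mathring\sigma(e_\theta) = \varepsilon e_{-\theta}$ after restriction from $\mathfrak{g}(\widetilde A)$ to $\mathfrak{L}(\mathfrak{g}(A))$; here one must be careful that the normalization $e_{-\theta} := -\mathring\omega(e_\theta)$ enters and that the $\tau$-twist defining $\mathring\sigma$ acts on the root vector $e_\theta$ of the (possibly long, but in the simply-laced case uniform) highest root consistently with how it acts on the simple root vectors. I would settle this by writing $\mathring\sigma = \mathring\omega\circ\tau$ with $\tau$ diagonal in the Chevalley basis, noting $\tau$ scales $e_\theta$ by $\prod_i \rho_i^{k_i}$ where $\theta=\sum k_i\alpha_i$, and tracking this scalar through $e_0 = t\otimes e_{-\theta}$. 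Everything else — the involution axioms, the eigenspace decomposition, the sign-multiplication closure — is routine bilinear algebra that I would state in a sentence each rather than belabor.
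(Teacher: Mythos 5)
Your proposal is correct and follows essentially the same route as the paper: verify involutivity from the way the loop bracket is defined, compute the action on the affine Chevalley generators to identify $\rho_{0}=\pm\varepsilon$, and read off the decomposition and the closure relations from the sign rules for the eigenspaces of $\beta_{\pm}$ and $\mathring{\sigma}$. The only difference is that the paper evaluates $\sigma\left(f_{0}\right)=\beta_{\pm}\left(t^{-1}\right)\otimes\mathring{\sigma}\left(e_{\theta}\right)=\pm\varepsilon e_{0}$ directly, which sidesteps the $\varepsilon$-versus-$\varepsilon'$ bookkeeping you flag (and which in any case follows in one line from $\mathring{\sigma}^{2}=\mathrm{id}$).
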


\begin{proof}
First of all $\sigma$ is an involution because of the way the Lie
bracket is defined on $\mathfrak{L}\left(\mathfrak{g}(A)\right)$.
From $\mathring{\sigma}\left(e_{\theta}\right)=\varepsilon e_{-\theta}$
one deduces that 
\[
\sigma\left(f_{0}\right)=\beta_{\pm}\left(t^{-1}\right)\otimes\mathring{\sigma}\left(e_{\theta}\right)=\pm t\otimes\left(\varepsilon e_{-\theta}\right)=\pm\varepsilon e_{0}
\]
and vice versa. The relations $\mathfrak{L}^{\pm}\cdot\mathfrak{L}^{\pm}\subset\mathfrak{L}^{+}$,
$\mathfrak{L}^{\pm}\cdot\mathfrak{L}^{\mp}\subset\mathfrak{L}^{-}$
hold for the $\pm1$-eigenspaces of any ring involution as well as
$\left[\mathfrak{s}^{\pm},\mathfrak{s}^{\pm}\right]\subset\mathfrak{s}^{+}$,
$\left[\mathfrak{s}^{\pm},\mathfrak{s}^{\mp}\right]\subset\mathfrak{s}^{-}$
hold for any Lie algebra involution.
\end{proof}
The representation theory of such Lie algebras is rather unexplored
at the moment. A major obstacle is that the Lie algebra does not have
a graded but only a filtered structure. An easy way to produce at
least some representations is via evaluation maps. The evaluation
map $ev_{a}$ for $0\neq a\in\mathbb{K}$ is defined as
\begin{equation}
ev_{a}:\mathfrak{L}\rightarrow\mathbb{K},\quad P(t)\mapsto P(t=a)\in\mathbb{K}.\label{eq:evaluation map}
\end{equation}
It is a ring homomorphism and any representation of $\mathfrak{g}\left(A\right)$
extends to a representation of $\mathfrak{L}\left(\mathfrak{g}\left(A\right)\right)$
via such evaluation maps. Explicitly, let $\rho:\mathfrak{g}\left(A\right)\rightarrow\text{End}\left(V\right)$
be a representation, then extend this representation to $\mathfrak{L}\left(\mathfrak{g}\left(A\right)\right)$
by setting 
\begin{equation}
\rho_{a}\left(P(t)\otimes x\right):=P(a)\cdot\rho\left(x\right).\label{eq:extension of reps to loop algebra}
\end{equation}
One can also use the structure of (\ref{eq:split of Fix-sub}). Given
a representation $\left(\phi,V\right)$ of $\mathfrak{s}^{+}$ one
can set 
\begin{equation}
\widetilde{\phi}_{a}\left(p\otimes x+q\otimes y\right)=ev_{a}\left(p\right)\cdot\phi(x)\ \forall\,p\in\mathfrak{L}^{+},q\in\mathfrak{L}^{-},x\in\mathfrak{s}^{+},y\in\mathfrak{s}^{-},\label{eq:Dirac spinor rep}
\end{equation}
which also provides a representation. Note that this representation
has a rather large kernel as all elements in $\mathfrak{L}^{+}\otimes\mathfrak{s}^{+}$
that can be written as a commutator of elements in $\mathfrak{L}^{-}\otimes\mathfrak{s}^{-}$
automatically vanishes.

Evaluation maps throw away lots of information because they ignore
most of the loop structure. It is possible to relate such filtered
Lie algebras to graded ones as described in \cite{kleinschmidt2021representations} and
 \cite{Nicolai:2004nv, Kleinschmidt:2006dy} for the case where $\sigma$ is
the Chevalley involution. However, the methods from \cite{kleinschmidt2021representations} should also
work for arbitrary $\mathring{\sigma}$.

%%%%%%%%%%%%%%%%%%%%%%%%%%%%%%%%%%%%%%%%%%%%%%%%%%%

\section{Examples}\label{sec:examples}
We conclude with some examples that may be interesting towards physical applications.

\subsubsection*{The euclidean example $\mathfrak{Q}_{4,0}\left(0,0,0,0\right)\left(\mathbb{R}\right)$
and the Lorentzian $\mathfrak{Q}_{3,1}\left(0,0,0,0\right)\left(\mathbb{R}\right)$}

As a first example consider the affine Kac-Moody algebra $\mathfrak{g}\left(\tilde{\text{A}}_{3}\right)\left(\mathbb{R}\right)$
and its involutory subalgebras where 
\[
\tilde{\text{A}}_{3}=\begin{pmatrix}2 & -1 & 0 & -1\\
-1 & 2 & -1 & 0\\
0 & -1 & 2 & -1\\
-1 & 0 & -1 & 2
\end{pmatrix}.
\]

We will show that $\mathfrak{Q}_{4,0}\left(0,0,0,0\right)\left(\mathbb{R}\right)$
maps to an involutory subalgebra of $\mathfrak{g}:=\mathfrak{g}\left(\tilde{\text{A}}_{3}\right)\left(\mathbb{R}\right)$.
Color the diagram $\mathcal{D}\left(\tilde{\text{A}}_{3}\right)$
such that nodes $1$ and $2$ are white and $3$ and $4$ are black (compare figure \Ref{fig:A_3_tildes} a).
Then with $\kappa:=\eta\tau$ as in eq. (\ref{lem:generators and relations of inv subs})
one has 
\begin{equation}
\text{Fix}_{\kappa}\left(\mathfrak{g}\right)=\left\langle Z_{1}=X_{1},Z_{2}=X_{2},Z_{3}=Y_{3},Z_{4}=Y_{4}\right\rangle _{\mathbb{R}}\label{eq:split sub in affine A3}
\end{equation}
 and one computes with (\ref{eq:Delta action}) that 
\[
\Delta\left(Z_{i}\right)=0\ \forall\,i=1,2,3,4.
\]
We will now show that the abstract involution $\kappa$ can be expressed in terms of an involution on the classical part $\mathfrak{g}\left(A_3\right)(\mathbb{R})\cong \mathfrak{sl}\left(4,\mathbb{R}\right)$ and an involution on the Laurent polynomials. 

\begin{figure}[h]
\label{fig:A_3_tildes}
\begin{centering}
\includegraphics[scale=0.5]{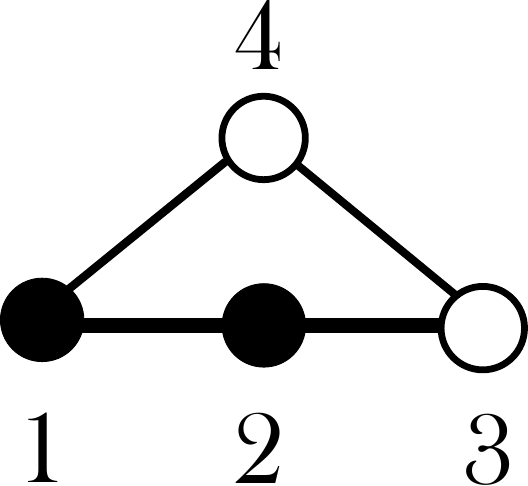}$\quad$ \includegraphics[scale=0.5]{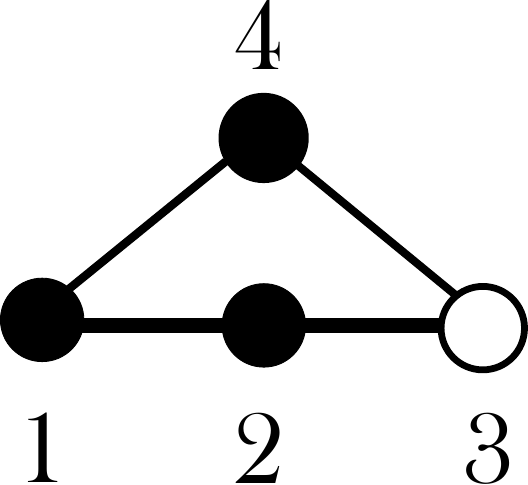}
\par\end{centering}
\caption{Two different colorings of $\widetilde{A}_3$.}
\end{figure}

\begin{prop}
\label{prop:description of Berman sub via loop algebra}The subalgebra
$\text{Fix}_{\kappa}\left(\mathfrak{g}\right)$ of $\mathfrak{g}\left(\widetilde{A}_{3}\right)\left(\mathbb{R}\right)$
is isomorphic to the subalgebra $\text{Fix}_{\sigma}\left(\mathfrak{L}\left(\mathfrak{g}\left(A\right)\right)\right)$
with $\sigma=\beta_{+}\otimes\mathring{\sigma}$ where $\beta_{+}$
is as in lemma \ref{lem:structure of involutory subalgebras} and
$\mathring{\sigma}$ is the involution of Berman type on $\mathfrak{g}\left(A_{3}\right)\left(\mathbb{R}\right)$
determined by 
\[
\mathring{\sigma}\left(e_{1}\right)=-f_{1},\ \mathring{\sigma}\left(e_{2}\right)=-f_{2},\ \mathring{\sigma}\left(e_{3}\right)=+f_{3}.
\]
With $\mathfrak{s}^{\pm}:=\left\{ x\in\mathfrak{g}\left(A_{3}\right)\left(\mathbb{R}\right)\,\vert\,\mathring{\sigma}\left(x\right)=\pm x\right\} $
and $\mathfrak{L}^{\pm}=\left\{ P\in\mathbb{R}\left[t,t^{-1}\right]\,\vert\,P\left(t^{-1}\right)=P(t)\right\} $
one obtains the following structure:
\[
\text{Fix}_{\kappa}\left(\mathfrak{g}\right)\cong\mathfrak{L}^{+}\otimes\mathfrak{s}^{+}\oplus\mathfrak{L}^{-}\otimes\mathfrak{s}^{-}.
\]
There exists an epimorphism from $\mathfrak{Q}_{4,0}\left(0,0,0,0\right)\left(\mathbb{R}\right)$
to $\text{Fix}_{\kappa}\left(\mathfrak{g}\right)$.
\end{prop}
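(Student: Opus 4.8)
The plan is to prove Proposition \ref{prop:description of Berman sub via loop algebra} in three stages: first identify $\text{Fix}_\kappa(\mathfrak{g})$ as a fixed-point subalgebra of the loop algebra, then verify the decomposition, and finally invoke the earlier machinery to get the epimorphism from $\mathfrak{Q}_{4,0}$.

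\textbf{Stage 1: reducing to the loop algebra.} By the lemma preceding Lemma \ref{lem:structure of involutory subalgebras}, since $\kappa=\eta\tau$ has trivial $\gamma$, the fixed-point subalgebra $\text{Fix}_\kappa(\mathfrak{g})$ lies inside $\mathfrak{L}(\mathfrak{g}(A_3))$, so we may as well work there. The task is to recognize $\kappa|_{\mathfrak{L}(\mathfrak{g}(A_3))}$ as a tensor involution $\beta_+\otimes\mathring\sigma$. I would check this on the Chevalley generators $e_0,f_0,e_1,\dots,f_3$ of $\mathfrak{g}(\widetilde A_3)$: for $i=1,2,3$ one has $\kappa(e_i)=-f_i=(\beta_+\otimes\mathring\sigma)(1\otimes e_i)$ since $\rho_1=\rho_2=-1$ and $\rho_3=+1$ match the prescription $\mathring\sigma(e_1)=-f_1$, $\mathring\sigma(e_2)=-f_2$, $\mathring\sigma(e_3)=+f_3$; and for the affine node, recall $e_0=t\otimes e_{-\theta}$, $f_0=t^{-1}\otimes e_\theta$ from eq. (\ref{eq:def of e_0 and f_0}). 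The coloring of $\widetilde A_3$ assigns node $0$ (which is node $4$ in the cyclic labeling) the sign $\rho_4=+1$, i.e. $\kappa(e_0)=f_0$; on the other hand $(\beta_+\otimes\mathring\sigma)(e_0)=\beta_+(t)\otimes\mathring\sigma(e_{-\theta})=t^{-1}\otimes\mathring\sigma(e_{-\theta})$, and using $e_{-\theta}=-\mathring\omega(e_\theta)$ together with the value of $\mathring\sigma$ on $e_\theta$ one gets $t^{-1}\otimes(\varepsilon e_\theta)=\varepsilon f_0$ where $\varepsilon$ is determined by $\mathring\sigma(e_\theta)=\varepsilon e_{-\theta}$, exactly the $\rho_0=+\varepsilon$ clause of Lemma \ref{lem:structure of involutory subalgebras}. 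So one must only check that the given $\mathring\sigma$ satisfies $\mathring\sigma(e_\theta)=e_{-\theta}$, i.e. $\varepsilon=+1$ — this is the one genuine computation, carried out in $\mathfrak{sl}(4,\mathbb R)$ where $\theta=\alpha_1+\alpha_2+\alpha_3$ and $e_\theta$ is a suitable multiple of $[e_1,[e_2,e_3]]$; applying $\mathring\sigma$ picks up the product of signs $(-1)(-1)(+1)=+1$, giving $\varepsilon=1$ as needed. Since the two involutions agree on all Chevalley generators of $\mathfrak{g}(\widetilde A_3)$ they agree on $\mathfrak{g}(\widetilde A_3)$, hence on the loop subalgebra.

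\textbf{Stage 2: the decomposition.} This is now immediate from Lemma \ref{lem:structure of involutory subalgebras}, eq. (\ref{eq:split of Fix-sub}): with $\beta_+$ ($t\mapsto t^{-1}$) the relevant $\mathfrak{L}^\pm$ are the symmetric and antisymmetric Laurent polynomials under $t\leftrightarrow t^{-1}$, and $\mathfrak{s}^\pm$ the $\pm1$-eigenspaces of $\mathring\sigma$ on $\mathfrak{sl}(4,\mathbb R)$; one concludes $\text{Fix}_\kappa(\mathfrak{g})\cong\mathfrak{L}^+\otimes\mathfrak{s}^+\oplus\mathfrak{L}^-\otimes\mathfrak{s}^-$ with the stated subalgebra/module structure. (I would note that $\mathring\sigma$ is a Berman involution with two white and one black node on the $A_3$ diagram, so $\mathfrak{s}^+$ is the corresponding $\text{Fix}$-subalgebra, a real form of $\mathfrak{sl}(4)$ of the appropriate signature.)

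\textbf{Stage 3: the epimorphism.} Here I would simply apply Proposition \ref{prop:condition for hom}: $\widetilde A_3$ is a simply-laced GCM, $\kappa=\eta\tau$ with $\tau$ $\mathbb{R}$-linear, and the coloring was chosen precisely so that $n_{black}(i)-n_{white}(i)=0$ for every node $i$ — indeed in the $4$-cycle each node has one white and one black neighbor, as already recorded by the computation $\Delta(Z_i)=0$ via eq. (\ref{eq:Delta action}). Hence the spectral relations (\ref{eq:spectrum}) with all $\mu_j=0$ are satisfied by $Z_1,\dots,Z_4$, the assignment $x_i\mapsto Z_i$ extends to a Lie algebra homomorphism $\mathfrak{Q}_{4,0}(0,0,0,0)(\mathbb R)\to\text{Fix}_\kappa(\mathfrak g)$, and it is surjective because the $Z_i$ generate $\text{Fix}_\kappa(\mathfrak g)$ by Proposition \ref{lem:generators and relations of inv subs}.

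\textbf{Main obstacle.} The only non-formal point is Stage 1: pinning down the sign $\varepsilon$ in $\mathring\sigma(e_\theta)=\varepsilon e_{-\theta}$ and checking it is compatible with the coloring of the affine node, since this requires fixing an explicit normalization of $e_\theta$ in $\mathfrak{sl}(4,\mathbb R)$ and tracking signs through $e_{-\theta}=-\mathring\omega(e_\theta)$; everything else is a direct citation of the general results in section \ref{sec:General}.
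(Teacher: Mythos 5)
Your proposal is correct and takes essentially the same route as the paper's own proof: identify $\kappa$ with $\beta_{+}\otimes\mathring{\sigma}$ on the Chevalley generators, with the one substantive check being the sign $\mathring{\sigma}\left(e_{\theta}\right)=e_{-\theta}$ computed from $e_{\theta}=\left[e_{1},\left[e_{2},e_{3}\right]\right]$, then cite Lemma \ref{lem:structure of involutory subalgebras} for the decomposition and Proposition \ref{prop:condition for hom} for the epimorphism. Your sign bookkeeping ($\varepsilon=(-1)(-1)(+1)=+1$, matching $\rho_{0}=+1$ for the black affine node) agrees with the paper's computation.
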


\begin{proof}
The involution $\kappa$ that describes $\text{Fix}_{\kappa}\left(\mathfrak{g}\right)$
is of Berman type and its action on $e_{i},f_{i}$ for $i=1,2,3$
coincides with the above action of $\mathring{\sigma}$. The action
of $\beta_{+}\otimes\mathring{\sigma}$ is already determined by $\mathring{\sigma}$
and we need to show that it coincides with $\kappa$. The highest
root of $A_{3}$ is $\theta=\alpha_{1}+\alpha_{2}+\alpha_{3}$ and
a suitable basis vector is 
\[
e_{\theta}:=\left[e_{1},\left[e_{2},e_{3}\right]\right]\ \Rightarrow\ e_{-\theta}=-\mathring{\omega}\left(e_{\theta}\right)=-(-1)^{3}\left[f_{1},\left[f_{2},f_{3}\right]\right]=\left[f_{1},\left[f_{2},f_{3}\right]\right].
\]
Now 
\[
\mathring{\sigma}\left(e_{-\theta}\right)=(-1)^{2}\left[e_{1},\left[e_{2},e_{3}\right]\right]=e_{\theta},
\]
so that 
\[
\sigma\left(e_{0}\right)=f_{0}=\kappa\left(e_{0}\right).
\]
Hence, lemma \ref{lem:structure of involutory subalgebras} can be
applied. Application of prop. \ref{prop:condition for hom} shows
that there exists an epimorphism from $\mathfrak{Q}_{4,0}\left(0,0,0,0\right)\left(\mathbb{R}\right)$
to $\text{Fix}_{\kappa}\left(\mathfrak{g}\right)$ because the diagram
is suitably colored.
\end{proof}
\begin{prop}
\label{prop:hom and structure for Q3-1}Denote by $\kappa=\eta\tau$
the $\mathbb{K}$-linear involution of Berman type on $\mathfrak{g}\left(\widetilde{A}_{3}\right)\left(\mathbb{R}\right)$
that is given by the signs (compare figure \Ref{fig:A_3_tildes} b) $\rho_{3}=+1$, $\rho_{0}=\rho_{1}=\rho_{2}=-1$
in (\ref{eq:def of tau}) and trivial diagram automorphism. Then there
exists an epimorphism of Lie algebras $\mathfrak{Q}_{3,1}\left(0,0,0,0\right)\left(\mathbb{R}\right)\rightarrow\mathfrak{S}:=\text{Fix}_{\kappa}\left(\mathfrak{g}\left(\widetilde{A}_{3}\right)\left(\mathbb{R}\right)\right)$,
given by sending the generators of $\mathfrak{Q}_{3,1}$ to the Berman
generators of $\mathfrak{S}$. Denote by $\mathring{\sigma}$ the
restriction of $\kappa$ to the naturally contained subalgebra $\mathfrak{g}\left(A_{3}\right)\left(\mathbb{R}\right)$
and recall from lemma \ref{lem:structure of involutory subalgebras}
that $\beta_{-}$ denotes the ring automorphism on the Laurent polynomials
$\mathfrak{L}$ that is defined by $t\mapsto-t^{-1}$. Then, $\sigma:=\beta_{-}\otimes\mathring{\sigma}$
coincides with $\kappa$ on the loop algebra $\mathfrak{L}\left(\mathfrak{g}\left(A_{3}\right)\right)\subset\mathfrak{g}\left(\widetilde{A}_{3}\right)$
and $\text{Fix}_{\kappa}\left(\mathfrak{g}\left(\widetilde{A}_{3}\right)\left(\mathbb{R}\right)\right)=\text{Fix}_{\sigma}\left(\mathfrak{L}\left(\mathfrak{g}\left(A_{3}\right)\right)\right)$.
One has that 
\[
\mathfrak{S}=\mathfrak{L}^{+}\otimes\mathfrak{s}^{+}\oplus\mathfrak{L}^{-}\otimes\mathfrak{s}^{-},
\]
where $\mathfrak{L}^{+}=\mathbb{R}\cdot1\oplus\mathbb{R}\left\{ t^{n}+(-1)^{n}t^{-n},\ n=1,\dots\right\} $
, $\mathfrak{L}^{-}:=\mathbb{R}\left\{ t^{n}-(-1)^{n}t^{-n},\ n=1,\dots\right\} $,
and $\mathfrak{s}^{\pm}:=\left\{ x\in\mathfrak{L}\left(\mathfrak{g}\left(A_{3}\right)\right)\,\vert\,\mathring{\sigma}(x)=\pm x\right\} $
denote the respective $\pm1$ eigenspaces of the involved involutions.
\end{prop}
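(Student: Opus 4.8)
The plan is to follow the template of Proposition~\ref{prop:description of Berman sub via loop algebra}, adapted to the coloring in which the classical nodes $1,2$ and the affine node $0$ are white and the classical node $3$ is black, and to the Lorentzian signature $(3,1)$. I would proceed in three steps: first identify $\kappa$ on the loop algebra with a product involution $\beta_-\otimes\mathring{\sigma}$, then read off the stated decomposition from Lemma~\ref{lem:structure of involutory subalgebras}, and finally verify the defining relations of $\mathfrak{Q}_{3,1}(0,0,0,0)$ for the Berman generators.

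\emph{Identifying $\kappa$ on the loop algebra.} Since $\kappa=\eta\tau$ has trivial diagram automorphism, its restriction to the canonically embedded $\mathfrak{g}(A_3)(\mathbb{R})$ is the Berman involution $\mathring{\sigma}$ acting by $\mathring{\sigma}(e_i)=\rho_i f_i$ and $\mathring{\sigma}(f_i)=\rho_i e_i$, i.e. $\mathring{\sigma}(e_1)=-f_1$, $\mathring{\sigma}(e_2)=-f_2$, $\mathring{\sigma}(e_3)=+f_3$. Taking $\theta=\alpha_1+\alpha_2+\alpha_3$ and the highest root vector $e_\theta=[e_1,[e_2,e_3]]$ one computes $e_{-\theta}=-\mathring{\omega}(e_\theta)=[f_1,[f_2,f_3]]$ and then $\mathring{\sigma}(e_\theta)=[-f_1,[-f_2,f_3]]=[f_1,[f_2,f_3]]=e_{-\theta}$, so $\varepsilon=+1$ in the notation of Lemma~\ref{lem:structure of involutory subalgebras}. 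That lemma then identifies $\sigma:=\beta_-\otimes\mathring{\sigma}$ as an involution of Berman type on $\mathfrak{L}(\mathfrak{g}(A_3))$ with $\rho_0=-\varepsilon=-1$, which is exactly the sign prescribed for $\kappa$ at the affine node. Because $\sigma$ and $\kappa$ agree on $e_i,f_i,h_i$ for $i=1,2,3$ and on $e_0=t\otimes e_{-\theta}$ — one has $\sigma(e_0)=\beta_-(t)\otimes\mathring{\sigma}(e_{-\theta})=-t^{-1}\otimes e_\theta=-f_0=\kappa(e_0)$, and symmetrically $\sigma(f_0)=\kappa(f_0)$ — they agree on all of $\mathfrak{L}(\mathfrak{g}(A_3))$, and Lemma~\ref{lem:structure of involutory subalgebras} then gives $\text{Fix}_\kappa(\mathfrak{g}(\widetilde{A}_3)(\mathbb{R}))=\text{Fix}_\sigma(\mathfrak{L}(\mathfrak{g}(A_3)))$.

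\emph{The decomposition.} Now $\mathfrak{S}=\mathfrak{L}^+\otimes\mathfrak{s}^+\oplus\mathfrak{L}^-\otimes\mathfrak{s}^-$ is precisely eq.~(\ref{eq:split of Fix-sub}), and it only remains to describe the $\pm1$-eigenspaces of $\beta_-$ explicitly. Writing $p=\sum_n c_n t^n$ gives $\beta_-(p)=\sum_n(-1)^n c_{-n}t^n$, so $\beta_-(p)=p$ forces $c_n=(-1)^n c_{-n}$, yielding $\mathfrak{L}^+=\mathbb{R}\cdot 1\oplus\mathbb{R}\{t^n+(-1)^n t^{-n}:n\geq 1\}$, while $\beta_-(p)=-p$ forces $c_0=0$ and $c_n=-(-1)^n c_{-n}$, yielding $\mathfrak{L}^-=\mathbb{R}\{t^n-(-1)^n t^{-n}:n\geq 1\}$, as claimed; $\mathfrak{s}^\pm$ are by definition the $\pm1$-eigenspaces of $\mathring{\sigma}$.

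\emph{The epimorphism.} For the last assertion I would send the three generators of $\mathfrak{Q}_{3,1}$ in the positive part of the signature to $Z_1=X_1$, $Z_2=X_2$, $Z_3=Y_3$ and the remaining generator to $Z_0=X_0$, and check relations~(\ref{eq:spectrum}) with all $\mu_j=0$ from~(\ref{eq:def relations 1})--(\ref{eq:def relations 3}). Since each node of $\widetilde{A}_3$ has exactly two neighbours and $[Z_i,[Z_i,Z_j]]$ equals $\rho_i Z_j$ when $a_{ij}=-1$ and $0$ when $a_{ij}=0$, the relevant alternating double commutator $\Delta_{\chi}(Z_j)=[X_1,[X_1,Z_j]]+[X_2,[X_2,Z_j]]+[Y_3,[Y_3,Z_j]]-[X_0,[X_0,Z_j]]$ vanishes for each $j\in\{0,1,2,3\}$: for $j\in\{0,2\}$ the two contributing neighbours are $1$ and $3$, contributing $\rho_1 Z_j+\rho_3 Z_j=0$; for $j\in\{1,3\}$ the contributing neighbours are the white classical node $2$ and the affine node $0$, the latter entering with the extra minus from the signature, so the contribution is $\rho_2 Z_j-\rho_0 Z_j=0$. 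Thus the assignment extends to a Lie algebra homomorphism $\mathfrak{Q}_{3,1}(0,0,0,0)(\mathbb{R})\to\mathfrak{S}$, and it is surjective since its image contains the Berman generators $Z_0,\dots,Z_3$, which generate $\mathfrak{S}$. The only real care needed is in keeping the two sets of signs — the coloring signs $\rho_i$ and the signature signs entering $\Delta_{\chi}$ — straight; there is no conceptual obstacle beyond the one already met in Proposition~\ref{prop:description of Berman sub via loop algebra}.
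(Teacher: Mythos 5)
Your proposal is correct and follows essentially the same route as the paper: identify $\sigma=\beta_-\otimes\mathring{\sigma}$ with $\kappa$ via $\mathring{\sigma}(e_\theta)=e_{-\theta}$ and $\rho_0=-\varepsilon=-1$, invoke Lemma~\ref{lem:structure of involutory subalgebras} for the eigenspace decomposition, and verify $\Delta_\chi(Z_\mu)=0$ from the Berman relations, with the neighbour contributions cancelling exactly as in the paper's case-by-case computation. The only addition is your explicit derivation of the bases of $\mathfrak{L}^\pm$, a detail the paper leaves implicit.
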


\begin{proof}
It is stated in lemma \ref{lem:structure of involutory subalgebras},
that $\sigma$ is of Berman type, where the sign $\rho_{0}$ for the
affine node is given as $\rho_{0}=-\varepsilon$ with $\mathring{\sigma}\left(e_{\theta}\right)=\varepsilon e_{-\theta}$.
In the proof of proposition \ref{prop:description of Berman sub via loop algebra}
we computed that $\mathring{\sigma}\left(e_{\theta}\right)=e_{-\theta}$
and therefore $\rho_{0}=-1$. This shows that $\sigma=\kappa$ and
so the only thing left to show is that the Berman generators $Z_{0},Z_{1},Z_{2},Z_{3}$
of $\mathfrak{S}$ satisfy 
\[
\left(-\text{ad}\left(Z_{0}\right)^{2}+\sum_{i=1}^{3}\text{ad}\left(Z_{i}\right)^{2}\right)\left(Z_{\mu}\right)=0\ \forall\,\mu=0,\dots,3.
\]
 Now $Z_{0}=X_{0}=e_{0}-f_{0}$ implies that $\text{ad}\left(Z_{0}\right)^{2}\left(Z_{1}\right)=-Z_{1}$
and $\text{ad}\left(Z_{0}\right)^{2}\left(Z_{3}\right)=-Z_{3}$. One
computes 
\[
\left(-\text{ad}\left(Z_{0}\right)^{2}+\sum_{i=1}^{3}\text{ad}\left(Z_{i}\right)^{2}\right)\left(Z_{0}\right)=\left(\text{ad}\left(Z_{1}\right)^{2}+\text{ad}\left(Z_{3}\right)^{2}\right)\left(Z_{0}\right)=\left(-1+1\right)\left(Z_{0}\right)
\]
\[
\left(-\text{ad}\left(Z_{0}\right)^{2}+\sum_{i=1}^{3}\text{ad}\left(Z_{i}\right)^{2}\right)\left(Z_{1}\right)=\left(-\text{ad}\left(Z_{0}\right)^{2}+\text{ad}\left(Z_{2}\right)^{2}\right)\left(Z_{1}\right)=\left(1-1\right)\left(Z_{1}\right)
\]
\[
\left(-\text{ad}\left(Z_{0}\right)^{2}+\sum_{i=1}^{3}\text{ad}\left(Z_{i}\right)^{2}\right)\left(Z_{2}\right)=\left(\text{ad}\left(Z_{1}\right)^{2}+\text{ad}\left(Z_{3}\right)^{2}\right)\left(Z_{2}\right)=\left(-1+1\right)\left(Z_{2}\right)
\]
\[
\left(-\text{ad}\left(Z_{0}\right)^{2}+\sum_{i=1}^{3}\text{ad}\left(Z_{i}\right)^{2}\right)\left(Z_{3}\right)=\left(-\text{ad}\left(Z_{0}\right)^{2}+\text{ad}\left(Z_{2}\right)^{2}\right)\left(Z_{3}\right)=\left(1-1\right)\left(Z_{3}\right).
\]
\end{proof}
Now $\mathfrak{s}^{+}\cong\mathfrak{so}\left(1,3\right)$ so that
all representations of $\mathfrak{so}\left(1,3\right)$ induce a representation
of $\mathfrak{Q}_{4,0}\left(0,0,0,0\right)\left(\mathbb{R}\right)$.
This includes for instance the Dirac spinors. Furthermore, infinite
dimensional unitary representations of $SO\left(1,3\right)$ provide
anti-hermitian representations of $\mathfrak{Q}_{4,0}\left(0,0,0,0\right)\left(\mathbb{R}\right)$
via this construction.

\begin{figure}[h]
\label{fig:A_n_tilde}
\begin{centering}
\includegraphics[scale=0.5]{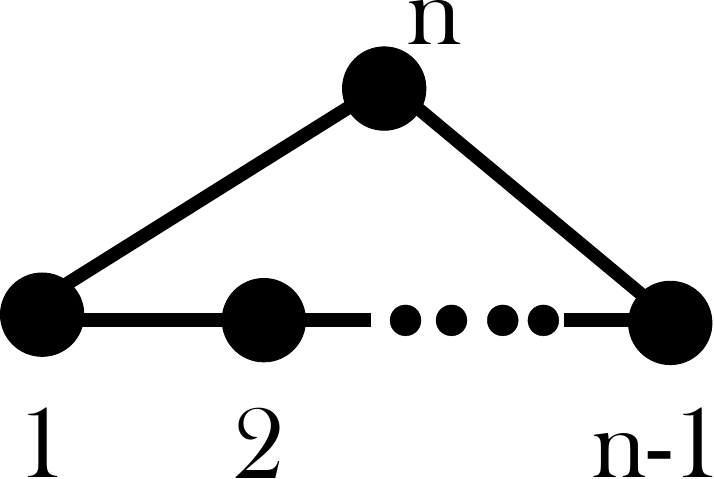}
\par\end{centering}
\caption{The generalized Dynkin diagram to the $\widetilde{A}_{n-1}$-series.}
\end{figure}

The above examples can be extended to $\mathfrak{Q}_{4n}\left(0,\dots,0\right)\left(\mathbb{R}\right)$
and $\mathfrak{Q}_{4n-1,1}\left(0,\dots,0\right)\left(\mathbb{R}\right)$
for any $n\geq1$, as the diagram then is also suitably colored. Then
an (anti-)hermitian representation of $\mathfrak{L}\left(\mathfrak{g}\left(A_{4n-1}\right)\right)$
induces one for $\text{Fix}_{\sigma}\left(\mathfrak{g}\left(\widetilde{A}_{4n-1}\right)\right)$
and therefore for $\mathfrak{Q}_{4n}\left(0,\dots,0\right)$ and $\mathfrak{Q}_{4n-1,1}\left(0,\dots,0\right)\left(\mathbb{R}\right)$.
By the evaluation map $ev_{a}$ any (anti-)hermitian representation
of $\mathfrak{g}\left(A_{4n-1}\right)$ extends to such a representation
of $\mathfrak{L}\left(\mathfrak{g}\left(A_{n-1}\right)\right)$ if
$a$ is real. Note that we considered the split-real form of $\mathfrak{g}\left(A_{4n-1}\right)$
which is $\mathfrak{sl}\left(4n,\mathbb{R}\right)$ and therefore
any such representation will be infinite-dimensional, as $SL\left(4n,\mathbb{R}\right)$
does not admit finite-dimensional unitary representations.

\subsubsection*{A homomorphism $\mathfrak{Q}_{n,0}\left(-2,\dots,-2\right)\left(\mathbb{R}\right)\twoheadrightarrow\mathfrak{k}\left(\widetilde{A}_{n-1}\right)\left(\mathbb{R}\right)$
and anti-hermitian representations}

Each node in Dynkin diagram of $\widetilde{A}_{n-1}$ has exactly
two neighbors (compare figure \ref{fig:A_n_tilde}). Taking the maximal compact subalgebra $\mathfrak{k}\left(\widetilde{A}_{n-1}\right)$
corresponds to coloring all nodes black and therefore there exists
an epimorphism from $\mathfrak{Q}_{n}\left(-2,\dots,-2\right)$ to
$\mathfrak{k}\left(\widetilde{A}_{n-1}\right)$ according to prop.
\ref{prop:condition for hom}.
\begin{cor}
There exists an epimorphism from $\mathfrak{Q}_{n}\left(-2,\dots,-2\right)$
to $\mathfrak{k}\left(\widetilde{A}_{n-1}\right)$ which is defined
by sending the generators $x_{1},\dots,x_{n}$ of $\mathfrak{Q}_{n}\left(-2,\dots,-2\right)$
to the Berman generators $X_{1}:=e_{1}-f_{1}$, $\dots$, $X_{n}=e_{n}-f_{n}$
of $\mathfrak{k}\left(\widetilde{A}_{n-1}\right)$.
\end{cor}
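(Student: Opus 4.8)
The plan is to obtain this as a direct instance of Proposition~\ref{prop:condition for hom} applied to the affine generalised Cartan matrix $A=\widetilde{A}_{n-1}$. First I would pin down the target: the maximal compact subalgebra $\mathfrak{k}(\widetilde{A}_{n-1})$ is $\text{Fix}_{\omega}(\mathfrak{g}(\widetilde{A}_{n-1})(\mathbb{R}))$ for the Chevalley involution $\omega$, which in the Berman parametrisation $\sigma=\eta\tau$ of eqs.~(\ref{eq:def of eta}) and~(\ref{eq:def of tau}) is exactly the case of trivial diagram automorphism with $\rho_i=-1$ for every $i$. Its Berman generators are therefore the elements $X_i=e_i-f_i$ of eq.~(\ref{eq:X_i and Y_i}), and they generate it; hence the assignment $x_i\mapsto X_i$ is automatically surjective as soon as it is shown to extend to a homomorphism of Lie algebras.

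The remaining work is to check hypothesis~(\ref{eq:condition for hom}). The generalised Dynkin diagram $\mathcal{D}(\widetilde{A}_{n-1})$ is the $n$-cycle (Figure~\ref{fig:A_n_tilde}), so every node has precisely two neighbours, and in the colouring $\rho_i=-1$ for all $i$ both neighbours carry that sign. Thus $n_{black}(i)-n_{white}(i)=0-2=-2$ for each $i$, and formula~(\ref{eq:Delta action}) yields $\Delta(X_i)=-2\,X_i$ for all $i=1,\dots,n$. With $\mu_i=-2$ this is precisely the spectrum condition~(\ref{eq:condition for hom}) for $\mathfrak{Q}_n(-2,\dots,-2)=\mathfrak{Q}_{n,0}(-2,\dots,-2)$, so Proposition~\ref{prop:condition for hom} delivers the epimorphism $x_i\mapsto X_i$ claimed in the statement.

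The one subtlety — and the place where a careful reader might object — is that $\widetilde{A}_{n-1}$ is not of full rank, whereas Propositions~\ref{lem:generators and relations of inv subs} and~\ref{prop:condition for hom} were phrased for full-rank simply-laced GCMs. This is only a bookkeeping point, not a genuine obstacle: the relations~(\ref{eq:def relations 1})--(\ref{eq:def relations 3}) among the Berman generators, and hence the $\Delta$-action~(\ref{eq:Delta action}), are consequences of the defining relations of $\mathfrak{g}(A)$ together with the (simply-laced) Serre relations alone, so they persist for affine $A$ after passing to $\mathfrak{g}'(\widetilde{A}_{n-1})$. Alternatively one can route the argument through Lemma~\ref{lem:structure of involutory subalgebras}: since $\gamma$ is trivial, $\mathfrak{k}(\widetilde{A}_{n-1})$ sits inside the loop algebra $\mathfrak{L}(\mathfrak{g}(A_{n-1}))$ over $\mathfrak{g}(A_{n-1})=\mathfrak{sl}(n,\mathbb{R})$, where one verifies $[X_j,[X_j,X_i]]=-X_i$ for adjacent $i\neq j$ directly, again giving $\Delta(X_i)=-2X_i$ and hence the homomorphism; surjectivity is then clear from the proof of Proposition~\ref{prop:condition for hom}. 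With this the corollary follows.
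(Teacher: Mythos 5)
Your proposal is correct and follows essentially the same route as the paper, which likewise derives the corollary in one line from Proposition~\ref{prop:condition for hom} by observing that every node of the $n$-cycle $\mathcal{D}(\widetilde{A}_{n-1})$ has exactly two neighbours of the same colour, giving $\mu_i=n_{black}(i)-n_{white}(i)=-2$ (note that under the paper's own convention the Chevalley involution corresponds to all nodes \emph{white}, which is what your computation uses and what the sign $-2$ requires, even though the surrounding text says ``black''). Your extra remarks on the full-rank hypothesis and the loop-algebra alternative are sound but go beyond what the paper records.
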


Now what anti-hermitian representations does this imply for $\mathfrak{Q}_{n,0}\left(-2,\dots,-2\right)\left(\mathbb{R}\right)$?
First of all, any highest weight representation of $\mathfrak{g}\left(\widetilde{A}_{n-1}\right)$
possesses a contravariant form w.r.t. which the $X_{i}$ are skew-adjoint
(see chapter 11 of \cite{Kac_book}, the result also applies for $\mathfrak{g}$
split-real and $\mathfrak{k}$ as above). But again we can exploit
that $\mathfrak{k}\left(\widetilde{A}_{n-1}\right)$ is contained
in the loop algebra $\mathfrak{L}\left(\mathfrak{g}\left(A_{n-1}\right)\right)$.
Any element of $\mathfrak{k}\left(\widetilde{A}_{n-1}\right)$ can
be written as a linear combination\footnote{We provided this description of $\mathfrak{k}\left(\widetilde{A}_{n-1}\right)\subset\mathfrak{L}\left(\mathfrak{g}\left(A_{n-1}\right)\right)$
in \cite{kleinschmidt2021representations}.} of 
\[
t^{n}\otimes E_{\alpha}-t^{-n}\otimes E_{-\alpha}\ ,\alpha\in\Delta\left(A_{n-1}\right),\quad\left(t^{n}-t^{-n}\right)\otimes h,\ h\in\mathring{\mathfrak{h}}
\]
where $E_{-\alpha}:=-\mathring{\omega}\left(E_{\alpha}\right)$ for
all $\alpha\in\Delta\left(A_{n-1}\right)$ and $\mathring{\mathfrak{h}}$
denotes the Cartan subalgebra of $\mathfrak{g}\left(A_{n-1}\right)$.
Let $\left(\phi,V\right)$ be a highest weight representation of $\mathfrak{g}\left(A_{n-1}\right)$
then the representation matrices can be chosen such that 
\[
\phi\left(E_{\alpha}\right)^{\dagger}=\phi\left(E_{-\alpha}\right),\ \phi\left(h\right)^{\dagger}=\phi(h)\ \forall\ h\in\mathring{\mathfrak{h}},
\]
so that upon identification of $1\otimes E_{\alpha}$ with $E_{\alpha}$
the elements $1\otimes E_{\alpha}-1\otimes E_{-\alpha}$ act by skew-adjoint
operators. Which evaluation maps $ev_{a}$ preserve this? One has
for $a\in\mathbb{R}$
\[
\phi_{a}\left(t^{n}\otimes E_{\alpha}-t^{-n}\otimes E_{-\alpha}\right)=a^{n}\cdot\phi\left(E_{\alpha}\right)-a^{-n}\phi\left(E_{-\alpha}\right),\quad\phi_{a}\left(\left(t^{n}-t^{-n}\right)\otimes h\right)=\left(a^{n}-a^{-n}\right)\cdot\phi\left(h\right)
\]
and so 
\[
\phi_{a}\left(t^{n}\otimes E_{\alpha}-t^{-n}\otimes E_{-\alpha}\right)^{\dagger}=a^{n}\cdot\phi\left(E_{-\alpha}\right)-a^{-n}\phi\left(E_{\alpha}\right)
\]
which is skew-adjoint if $a^{n}=a^{-n}$ for all $n$ which only leaves
$a=\pm1$. This also implies $ev_{a}\left(t^{n}-t^{-n}\right)=0$
and therefore the $\left(t^{n}-t^{-n}\right)\otimes h$ act trivially.
\begin{prop}
Any highest weight representation $\left(\phi,V\right)$ of $\mathfrak{g}\left(A_{n-1}\right)$
induces an anti-hermitian representation $\left(\phi_{\pm},V\right)$
of $\mathfrak{k}\left(\widetilde{A}_{n-1}\right)\subset\mathfrak{L}\left(\mathfrak{g}\left(A_{n-1}\right)\right)$
given by combining $\phi$ with the evaluation map at $\pm1$. By
the epimorphism $\mathfrak{Q}_{n,0}\left(-2,\dots,-2\right)\left(\mathbb{R}\right)\rightarrow\mathfrak{k}\left(\widetilde{A}_{n-1}\right)\left(\mathbb{R}\right)$
this result extends to $\mathfrak{Q}_{n,0}\left(-2,\dots,-2\right)\left(\mathbb{R}\right)$.
\end{prop}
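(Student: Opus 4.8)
The plan is to assemble ingredients that are already in place above, in the following order. First I would recall from \cite{kleinschmidt2021representations} the explicit realization $\mathfrak{k}(\widetilde{A}_{n-1})\subset\mathfrak{L}(\mathfrak{g}(A_{n-1}))$, namely that $\mathfrak{k}(\widetilde{A}_{n-1})$ is spanned over $\mathbb{R}$ by the elements $t^{m}\otimes E_{\alpha}-t^{-m}\otimes E_{-\alpha}$ for $\alpha\in\Delta(A_{n-1})$ and $m\in\mathbb{Z}$, together with $(t^{m}-t^{-m})\otimes h$ for $h\in\mathring{\mathfrak{h}}$, where $E_{-\alpha}:=-\mathring{\omega}(E_{\alpha})$. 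Second, I would invoke the standard fact (Chapter 11 of \cite{Kac_book}, which also applies to the split-real form) that every highest weight module $(\phi,V)$ of $\mathfrak{g}(A_{n-1})$ carries a nondegenerate contravariant form; choosing the root vectors $E_{\alpha}$ compatibly with this form one obtains $\phi(E_{\alpha})^{\dagger}=\phi(E_{-\alpha})$ and $\phi(h)^{\dagger}=\phi(h)$ for all $h\in\mathring{\mathfrak{h}}$.

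Next I would extend $\phi$ to $\mathfrak{L}(\mathfrak{g}(A_{n-1}))$ by the evaluation construction of eq. (\ref{eq:extension of reps to loop algebra}), setting $\phi_{a}(P(t)\otimes x):=P(a)\,\phi(x)$; since $ev_{a}$ is a ring homomorphism this is a homomorphism of Lie algebras on the whole loop algebra, hence in particular its restriction to $\mathfrak{k}(\widetilde{A}_{n-1})$ is a representation. Then I would compute adjoints on the spanning set above: $\phi_{a}\bigl(t^{m}\otimes E_{\alpha}-t^{-m}\otimes E_{-\alpha}\bigr)^{\dagger}=a^{m}\phi(E_{-\alpha})-a^{-m}\phi(E_{\alpha})$, which equals the negative of the original operator exactly when $a^{m}=a^{-m}$; imposing this for all $m$ forces $a=\pm1$ among real evaluation points. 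At $a=\pm1$ one has $a^{m}-a^{-m}=0$, so the Cartan-type generators $(t^{m}-t^{-m})\otimes h$ act by zero and every remaining generator acts by a skew-adjoint operator. Hence $\phi_{\pm}:=\phi\circ ev_{\pm1}$ is an anti-hermitian representation of $\mathfrak{k}(\widetilde{A}_{n-1})(\mathbb{R})$.

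Finally I would precompose $\phi_{\pm}$ with the epimorphism $\mathfrak{Q}_{n,0}(-2,\dots,-2)(\mathbb{R})\twoheadrightarrow\mathfrak{k}(\widetilde{A}_{n-1})(\mathbb{R})$, $x_{i}\mapsto X_{i}=e_{i}-f_{i}$, furnished by the preceding corollary; the composite sends every generator of the QMSA to a skew-adjoint operator, which is the claimed representation. The genuinely non-routine inputs are the explicit loop-algebra description of $\mathfrak{k}(\widetilde{A}_{n-1})$ and the existence of the contravariant form; given these, the only point that needs care is that \emph{no} real evaluation point other than $\pm1$ keeps the generators skew-adjoint, which is precisely what makes working over $\mathbb{R}$ essential here.
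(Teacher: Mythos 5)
Your proposal is correct and follows essentially the same route as the paper, whose argument is the discussion immediately preceding the proposition: the spanning set $t^{m}\otimes E_{\alpha}-t^{-m}\otimes E_{-\alpha}$, $(t^{m}-t^{-m})\otimes h$ for $\mathfrak{k}(\widetilde{A}_{n-1})$, the contravariant form giving $\phi(E_{\alpha})^{\dagger}=\phi(E_{-\alpha})$, the computation forcing $a^{m}=a^{-m}$ and hence $a=\pm1$, and precomposition with the epimorphism from the QMSA. No substantive differences.
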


Also, the representations described in \cite{kleinschmidt2021representations} provide representations that do not stem from representations of $\mathfrak{g}\left(\widetilde{A}_{n-1}\right)\left(\mathbb{R}\right)$ or $\mathfrak{g}\left(A_{n-1}\right)\left(\mathbb{R}\right)$.

\subsubsection*{A homomorphism $\mathfrak{Q}_{10-d,d}\left(0,\dots,0\right)\twoheadrightarrow\mathfrak{g}'\left(E_{9}\right)$}

Different to the previous examples we now try to find a set of $10$ elements $y_{1},\dots,y_{10}$
which satisfy $\text{ad}\left(y_{i}\right)^{2}\left(y_{j}\right)=0$
already without any summation. Then, any result we obtain will hold
for arbitrary signature $(10-d,d)$.
\begin{cor}[{to prop. \ref{prop:epi into n+-}}]There exists an epimorphism $\mathfrak{Q}_{10-d,d}\left(0,\dots,0\right)\rightarrow\mathfrak{n}^{+}\left(E_{10}\right)$.
\end{cor}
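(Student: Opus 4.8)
The plan is to obtain this as an immediate specialisation of Proposition~\ref{prop:epi into n+-}. First I would record the two structural facts needed to invoke it: the generalized Cartan matrix $E_{10}$ is simply-laced (all off-diagonal entries lie in $\{0,-1\}$) and it has full rank $10$ (it is nondegenerate, of Lorentzian signature), so $E_{10}$ satisfies the hypotheses of Definition~\ref{def:KMA} and of Proposition~\ref{prop:epi into n+-} with $n=10$. Applying that proposition with $m=d$ then gives directly the epimorphism $\phi^{+}\colon\mathfrak{Q}_{10-d,d}(0,\dots,0)\to\mathfrak{n}^{+}(E_{10})$, $x_i\mapsto e_i$.

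For completeness I would also spell out, as was flagged just before the statement, why no summation is needed and hence why the signature $(10-d,d)$ is irrelevant. Since $E_{10}$ is simply-laced, the Serre relation $\text{ad}(e_i)^{1-a_{ij}}(e_j)=0$ reads $\text{ad}(e_i)^2(e_j)=0$ when $a_{ij}=-1$ and $[e_i,e_j]=0$ (so a fortiori $\text{ad}(e_i)^2(e_j)=0$) when $a_{ij}=0$, while $\text{ad}(e_i)^2(e_i)=\text{ad}(e_i)([e_i,e_i])=0$ trivially. Thus $\text{ad}(e_i)^2(e_j)=0$ for every ordered pair $(i,j)$, so for any choice of split one has
\[
\sum_{i=1}^{10-d}[e_i,[e_i,e_j]]-\sum_{i=11-d}^{10}[e_i,[e_i,e_j]]=0=0\cdot e_j\quad\text{for all }j,
\]
which are precisely the defining relations~(\ref{eq:spectrum}) of $\mathfrak{Q}_{10-d,d}(0,\dots,0)$. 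Hence $x_i\mapsto e_i$ extends to a homomorphism of Lie algebras, and since $e_1,\dots,e_{10}$ generate $\mathfrak{n}^{+}(E_{10})$ it is surjective onto $\mathfrak{n}^{+}(E_{10})$.

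I do not anticipate a genuine obstacle here: the statement is a corollary in the literal sense, and the only inputs beyond Proposition~\ref{prop:epi into n+-} are the standard facts that $E_{10}$ is simply-laced and invertible --- and even invertibility is inessential, since $\mathfrak{n}^{+}\subset\mathfrak{g}'(A)$ for any GCM $A$, so the construction of Definition~\ref{def:KMA} delivers the correct $\mathfrak{n}^{+}$ in any case. If anything deserves emphasis it is that Proposition~\ref{prop:epi into n+-} simultaneously produces $\phi^{-}\colon\mathfrak{Q}_{10-d,d}(0,\dots,0)\to\mathfrak{n}^{-}(E_{10})$ with $\phi^{+}=-\omega\circ\phi^{-}$, so one actually gets a matched pair of epimorphisms onto $\mathfrak{n}^{\pm}(E_{10})$ interchanged by the Cartan-Chevalley involution $\omega$.
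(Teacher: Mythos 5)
Your proposal is correct and matches the paper exactly: the paper offers no separate proof, treating the statement as an immediate instance of Proposition~\ref{prop:epi into n+-} applied to the simply-laced, invertible GCM $E_{10}$ with $n=10$ and $m=d$, which is precisely what you do. Your additional verification that $\mathrm{ad}(e_i)^2(e_j)=0$ for every ordered pair (so that the signature plays no role) is the same computation already carried out in the paper's proof of that proposition.
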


More interestingly, corollary \ref{cor:Epi Q-->classical} shows that
$\mathfrak{Q}_{p,q,d}\left(0,\dots,0\right)\left(\mathbb{R}\right)$ with $9\leq p+q \leq \vert \frac{1}{2}\Delta(E_8)$ 
maps onto $\mathfrak{g}\left(E_{8}\right)\left(\mathbb{R}\right)$
such that each generator is realized nontrivially and linearly independent:
\[
\phi\left(x_{i}\right)=f_{i}\,\forall\,i=1,\dots,8,\ \phi\left(x_{9}\right)=e_{\theta},\ \phi\left(x_{9+j}\right)=e_{\beta_j}.
\]
For $p+q=9$, it also sets apart one generator, as only $x_{9}$ is realized in
a positive root space. For affine root systems the the root system is  
\[
\Delta^{re}(A)=\left\{ n\delta+\alpha\ \vert\ n\in\mathbb{Z},\ \alpha\in\Delta\left(\mathring{A}\right)\right\} ,\ \Delta^{im}(A)=\left\{ n\delta\,\vert\,n\in\mathbb{Z}\setminus\{0\}\right\} ,
\]
where $\mathring{A}$ denotes the sub-diagram of finite type.
\begin{prop}
Let $\gamma=n\delta+\beta\in\Delta^{re}(E_{9})$ with $\beta\in\Delta\left(E_{8}\right)\setminus\left\{ -\theta,\alpha_{1},\dots,\alpha_{8}\right\} $
and $e_{\gamma}\in\mathfrak{g}(E_{9})_{\gamma}$, where $\theta$
denotes the highest $E_{8}$-root. Then $\left\langle e_{\gamma},f_{0},f_{1},\dots,f_{8}\right\rangle \cong\mathfrak{g}'\left(E_{9}\right)$
and the map 
\begin{equation}
x_{0}\mapsto f_{0},\,x_{1}\mapsto f_{1},\dots,x_{10}\mapsto e_{\gamma}\label{eq:hom Q_10 to g'(E9)}
\end{equation}
extends to an epimorphism of $\mathfrak{Q}_{10-d,d}\left(0,\dots,0\right)\rightarrow\mathfrak{g}'\left(E_{9}\right)$.
\end{prop}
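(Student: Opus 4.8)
I would prove the two assertions in turn: first, that the assignment \eqref{eq:hom Q_10 to g'(E9)} respects the defining relations \eqref{eq:spectrum} of $\mathfrak Q_{10-d,d}(0,\dots,0)$, so that it extends to a homomorphism; second, that the image of this homomorphism exhausts $\mathfrak g'(E_9)$. Write $y_0:=f_0,\dots,y_8:=f_8$ and $y_9:=e_\gamma$, and observe that $\gamma$ is implicitly a positive root, i.e.\ $n\ge1$. As anticipated at the start of this subsection, the prescribed spectrum is zero and the signature $(10-d,d)$ is left free, so it suffices to establish the sign-free identities
\[
\operatorname{ad}(y_i)^2(y_j)=0\qquad\text{for all }i\neq j,
\]
since then $\sum_i\varepsilon_i\operatorname{ad}(y_i)^2(y_j)=0$ for any choice of signs $\varepsilon_i=\pm1$, while $\operatorname{ad}(y_j)^2(y_j)=0$ is automatic.

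The mechanism I would use is that $\operatorname{ad}(u)^2(v)$ lies in the weight space $\mathfrak g_{2\mu+\nu}$ whenever the root vectors $u,v$ have weights $\mu,\nu$, so it vanishes as soon as $2\mu+\nu$ is not a root; conversely, if $2\mu+\nu$ \emph{is} a root and $v$ spans a one-dimensional real root space, the interval property of $\alpha$-strings forces $\operatorname{ad}(u)^2(v)\neq0$. For the pairs $(y_i,y_j)$ with $i,j\le8$ this just re-derives the Serre relations, which hold because $E_9$ is simply laced. The pairs involving $e_\gamma$ reduce to checking that $\gamma-2\alpha_i$ and $2\gamma-\alpha_i$ are not roots for $i=0,\dots,8$. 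Using $\Delta^{re}(E_9)=\{m\delta+\alpha\mid m\in\mathbb Z,\ \alpha\in\Delta(E_8)\}$, $\Delta^{im}(E_9)=\{m\delta\mid m\in\mathbb Z\setminus\{0\}\}$, $(\delta,\alpha_i)=0$, and $\alpha_0=\delta-\theta$, these weights take the form $n\delta+(\beta-2\alpha_i)$, $2n\delta+(2\beta-\alpha_i)$ for $i\ge1$ and $(n-2)\delta+(\beta+2\theta)$, $(2n-1)\delta+(2\beta+\theta)$ for $i=0$. A vector $m\delta+v$ with $v$ a nonzero element of the $E_8$ root lattice is a root of $E_9$ only if $v\in\Delta(E_8)$, and a one-line norm computation — e.g.\ $(\beta-2\alpha_i,\beta-2\alpha_i)=10-4(\beta,\alpha_i)$ and $(\beta+2\theta,\beta+2\theta)=10+4(\beta,\theta)$ — shows that every finite part occurring has squared length $>2$, hence is no $E_8$-root, \emph{unless} $(\beta,\alpha_i)=2$, i.e.\ $\beta=\alpha_i$ for some $i\ge1$, or $(\beta,\theta)=-2$, i.e.\ $\beta=-\theta$. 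These are exactly the configurations excluded by hypothesis; in every remaining case the relevant weight is not a root, so $\operatorname{ad}(y_i)^2(y_j)=0$ and \eqref{eq:hom Q_10 to g'(E9)} extends to a Lie algebra homomorphism.

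For surjectivity I would invoke Lemma \ref{lem:max support root implies derived subalgebra is generated} for the (simply-laced, irreducible) affine GCM of $E_9$: as $\gamma$ is a positive real root, it remains only to see that it has maximal support. In the expansion $\delta=\alpha_0+\sum_{i=1}^{8}m_i\alpha_i$ every $m_i$ (the coefficient of $\alpha_i$ in the $E_8$ highest root $\theta$) is $\ge2$; hence the $\alpha_0$-coefficient of $\gamma=n\delta+\beta$ equals $n\ge1$ and its $\alpha_i$-coefficient equals $nm_i+b_i$ with $b_i$ the $\alpha_i$-coefficient of $\beta$, and using $\theta-\mu\in\sum_j\mathbb Z_{\ge0}\alpha_j$ for every positive root $\mu$ of $E_8$ together with $\beta\notin\{-\theta,\alpha_1,\dots,\alpha_8\}$ one checks that this is positive for all $n\ge2$ (for the borderline $n=1$ one has to impose full support of $\gamma$ as an additional hypothesis). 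Granting maximal support, Lemma \ref{lem:max support root implies derived subalgebra is generated} gives $\langle e_\gamma,f_0,\dots,f_8\rangle\cong\mathfrak g'(E_9)$; since this is precisely the set of images of the generators of $\mathfrak Q_{10-d,d}(0,\dots,0)$ under \eqref{eq:hom Q_10 to g'(E9)}, the homomorphism is onto.

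The main obstacle is the second step. In contrast to the finite-type identity \eqref{eq:double commutator vanishes in spherical simply-laced algebra}, an $\alpha_i$-string through the affine real root $\gamma$ may genuinely contain an imaginary root $m\delta$, so that identity cannot be quoted; the vanishing must instead come from the ``target weight is not a root'' criterion, made to work by the elementary norm computation — and it is exactly the failures of that computation which dictate the excluded set $\{-\theta,\alpha_1,\dots,\alpha_8\}$. The only remaining, essentially clerical point is to pin down maximal support in the borderline case $n=1$; if one merely wants to exhibit epimorphisms onto $\mathfrak g'(E_9)$, restricting to $n\ge2$ disposes of this.
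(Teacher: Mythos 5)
Your proposal is correct and follows essentially the same route as the paper: verify $\operatorname{ad}(y_i)^2(y_j)=0$ pairwise by showing the weights $2\gamma-\alpha_i$, $\gamma-2\alpha_i$, $2\gamma-\alpha_0$, $\gamma-2\alpha_0$ are not roots (your norm computation just makes explicit what the paper asserts from $\beta\neq\alpha_i$, $\beta\neq-\theta$), and then invoke the maximal-support lemma for surjectivity. Your caveat about the surjectivity step is in fact a genuine catch that the paper's proof passes over: for $n=1$ and $\beta$ a negative root such as $\beta=-(\theta-\alpha_8)$ one gets $\gamma=\alpha_0+\alpha_8$, which does not have full support and for which $\left\langle e_{\gamma},f_{0},\dots,f_{8}\right\rangle$ is easily seen (by a degree argument) to be a proper subalgebra, so the proposition as literally stated needs the additional hypothesis that $\gamma$ is positive with full support (automatic for $n\geq 2$, or for $n=1$ with $\beta>0$) --- consistent with the paper's own remark that one "could specialize to $\gamma=2\delta+\beta$".
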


\begin{proof}
The claim $\left\langle e_{\gamma},f_{0},f_{1},\dots,f_{8}\right\rangle \cong\mathfrak{g}'\left(E_{9}\right)$
follows from lemma \ref{lem:max support root implies derived subalgebra is generated}.
More directly one could also specialize to $\gamma=2\delta+\beta$
for some $\beta\in\Delta\left(E_{8}\right)$. By successive application
of $f_{1},\dots,f_{8}$ it is possible to reach the root space $2\delta-\theta$,
because in the finite root system $\Delta(E_{8})$, $-\theta$ is
a lowest root. But then 
\[
\left(2\delta-\theta\right)-2\alpha_{0}=\left(2\delta-\theta\right)-2\left(\delta-\theta\right)=\theta
\]
and from $e_{\theta}$ one obtains all $e_{1},\dots,e_{8}$. Since
the intermediate step reaches an element $e_{\delta}$ one can apply
$e_{-\theta}$ and obtain $e_{0}$ as well.

It is still left to show that (\ref{eq:hom Q_10 to g'(E9)}) defines
a homomorphism. The Serre-relations imply that $\text{ad}\left(f_{i}\right)^{2}\left(f_{j}\right)=0$
for all $i,j\in\{0,\dots,8\}$. For $\gamma=n\delta+\beta\in\Delta^{re}\left(E_{9}\right)$
one has for $i=1,\dots,8$ that $2\gamma-\alpha_{i},\gamma-2\alpha_{i}\notin\Delta\left(E_{9}\right)$
because $2\beta-\alpha_{i},\beta-2\alpha_{i}\notin\Delta\left(E_{8}\right)$.
Here it is actually crucial that $\beta\neq\alpha_{i}$ $\forall\,i=1,\dots,8$.
This implies that the corresponding root spaces are $0$-dimensional
and so one has
\[
\text{ad}\left(e_{\gamma}\right)^{2}\left(f_{i}\right)=0=\text{ad}\left(f_{i}\right)^{2}\left(e_{\gamma}\right)\ \forall\,i=1,\dots,8.
\]
Now for $f_{0}$ it is
\[
2\gamma-\alpha_{0}=2n\delta+2\beta-\left(\delta-\theta\right)=\left(2n-1\right)\delta+2\beta+\theta\notin\Delta\left(E_{9}\right),
\]
because $2\beta+\theta\notin\Delta\left(E_{8}\right)$ as $\beta\neq-\theta$.
For the same reason
\[
\gamma-2\alpha_{0}=(n-2)\delta+\beta+2\theta\notin\Delta\left(E_{9}\right),
\]
so that one also has 
\[
\text{ad}\left(e_{\gamma}\right)^{2}\left(f_{0}\right)=0=\text{ad}\left(f_{0}\right)^{2}\left(e_{\gamma}\right).
\]
This shows that that (\ref{eq:hom Q_10 to g'(E9)}) extends to a homomorphism.
As $\left\langle e_{\gamma},f_{0},f_{1},\dots,f_{8}\right\rangle \cong\mathfrak{g}'\left(E_{9}\right)$
it is an epimorphism.
\end{proof}
We believe that it is possible to find a similar epimorphism for $\mathfrak{Q}_{11-d,d}\left(0,\dots,0\right)\rightarrow\mathfrak{g}'\left(E_{10}\right)$. One simply has to find (and therefore show existence of) a real positive root with maximal support $\beta$, such that $\beta-2\alpha_i\notin\Delta\left(E_{10}\right)$ for all $i=1,\dots,10$.

\subsubsection*{A homomorphism $\mathfrak{Q}_{26-m,m}\left(0,\dots,0\right)(\mathbb{R})\twoheadrightarrow \mathfrak{g}\left(E_{10}\right)(\mathbb{R})$}
The following example shows that it is also possible to map QMSAs of higher rank to KM-algebras of much lower rank such that the generators are still linearly independent in the image. 
\begin{figure}[h]
\label{fig:E_10}
\begin{centering}
\includegraphics[scale=0.3]{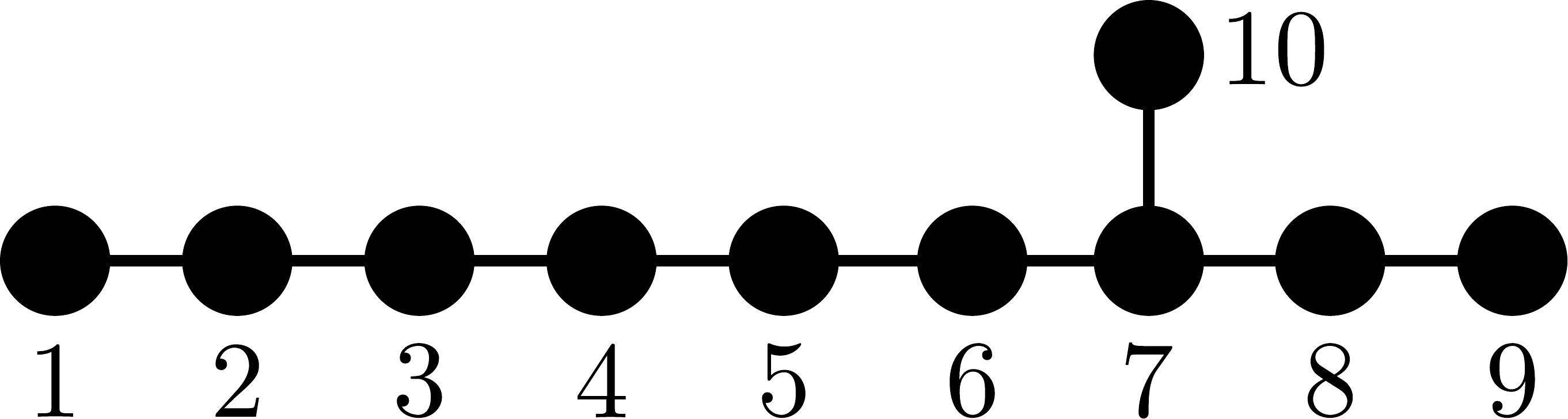}
\par\end{centering}
\caption{The generalized Dynkin diagram of type $E_{10}$.}
\end{figure}
Towards this, we make use of the structure of the $E_{10}$-root system's structure, expecially that it contains an $A_9$-sub-root system, in our notation spanned by the roots $\alpha_1,\dots,\alpha_9$. 
We map the first $10$ generators $x_1,\dots, x_{10}$ of $\mathfrak{Q}_{26-m,m}\left(0,\dots,0\right)(\mathbb{R})$ to the Chevalley generators $f_1,\dots,f_{10}$ as in the previous examples. 
But to avoid a detailed study of the $E_{10}$-root system we pick different roots $\beta_1,\dots,\beta_{16}\in \Delta^{re}_{+}\left(E_{10}\right)$ to accomplish $\left\langle f_1,\dots,f_{10},e_{\beta_1},\dots,e_{\beta_{16}} \right\rangle=\mathfrak{g}\left(E_{10}\right)$. 
Set $\beta=\alpha_7+\alpha_{10}$, where the labeling of simple roots is according to figure \ref{fig:E_10}. 
The product $\left(\beta\vert\gamma\right)$ with any positive root $\gamma\in\Delta\left(A_9\right)$ is completely detemined by the following ones:
\[
\left(\beta\vert\alpha_7\right)=1,\ \left(\beta_1\vert\ \alpha_6+\alpha_7\right)=0=\left(\beta\vert\ \alpha_7+\alpha_8\right),\ \left(\beta\vert\alpha_6+\alpha_7+\alpha_8\right)=-1,\ \left(\beta\vert\alpha_6\right)=-1=\left(\beta\vert+\alpha_8\right) .
\]
These determine everything, because if $\gamma=\sum_{i=1}^{9}k_i\alpha_i\in \Delta\left(A_9\right)$ is a positive $A_9$-root, then $k_i\in\{0,1\}$ such that $\text{supp}(\gamma)$ is connected. 
As the coefficients $k_i$ in  decomposition of any root $\delta=\sum_{i=1}^{10}k_i\alpha_i\Delta\left(E_{10}\right)$ have to be either all nonnegative or all nonpositive, one concludes that $\beta-\gamma\notin \Delta\left(E_{10}\right)$ for all $\alpha_7\neq\gamma\in \Delta\left(A_9\right)$. 
Hence one arrives at 
\begin{eqnarray*}
k\beta+\gamma,\,\beta+k\gamma\in\Delta\left(E_{10}\right) & \text{for }k\in\left\{ 0,1\right\}  & \text{if }\left(\beta\vert\gamma\right)=-1\\
k\beta+\gamma,\,\beta+k\gamma\in\Delta\left(E_{10}\right) & \text{for }k\in\left\{ 0\right\}  & \text{if }\left(\beta\vert\gamma\right)=0\\
k\beta+\gamma,\,\beta+k\gamma\in\Delta\left(E_{10}\right) & \text{for }k\in\left\{ 0,-1\right\}  & \text{if }\left(\beta\vert\gamma\right)=+1,
\end{eqnarray*}
where the last case only ocurrs for $\gamma=\alpha_7$. This shows with $\beta-\gamma\notin \Delta\left(E_{10}\right)$ that
\begin{equation}\label{eq:Serre-type relations in E_10 part 1}
\text{ad}\left(e_{\beta}\right)^2\left(e_\gamma\right)= 0 = \text{ad}\left(e_{\gamma}\right)^2\left(e_{\beta}\right)\ \forall\,\gamma\in\Delta\left(A_9\right)\setminus\left\{\alpha_1,\dots,\alpha_9\right\}\,.
\end{equation}
Note that while the exception of simple roots is not necessary for the above equality it is necessary for the following: 
\begin{equation}\label{eq:Serre-type relations in E_10 part 2}
\text{ad}\left(f_i\right)^2\left(e_\gamma\right)= 0 = \text{ad}\left(e_{\gamma}\right)^2\left(f_i\right)\ \forall\,\gamma\in\Delta\left(A_9\right)\setminus\left\{\alpha_1,\dots,\alpha_9\right\}\ \forall\,i=1,\dots,10.
\end{equation}
One now takes the $A_9$-roots
\[
\Gamma:=\bigcup_{i=1}^{8}\left\{ \alpha_i+\alpha_{i+1} \right\} \,\cup\,\bigcup_{i=1}^{7} \left\{ \alpha_i+\alpha_{i+1}+\alpha_{i+2} \right\}
\]
and observes that 
\[
\chi:=\left\{ f_1,\dots,f_{10}, e_\beta, e_\gamma\, \vert\, \gamma\in\Gamma\right\}
\]
is a generating set of $\mathfrak{g}\left(E_{10}\right)(\mathbb{R})$ that satisfies $\text{ad}(x)^2(y)=0$ $\forall\,x,y\in\chi$. As $\vert\chi\vert=26$, it provides an epimorphism $\mathfrak{Q}_{26-m,m}\left(0,\dots,0\right)(\mathbb{R})\twoheadrightarrow \mathfrak{g}\left(E_{10}\right)(\mathbb{R})$ for any signature and of course also if one replaces $\mathbb{R}$ with $\mathbb{C}$. The amount of arbitrary choices we made in this example show that these kinds of epimorphisms are not specific to $\mathfrak{Q}_{26-m,m}$ and $\mathfrak{g}\left(E_{10}\right)$ but we thought it is a potentially interesting example.

%%%%%%%%%%%%%%%%%%%%%%%%%%%%%

\end{document}